\numberwithin{equation}{section}
\newcommand{\nm}{\,\rule[-.6ex]{.13em}{2.3ex}\,}
\newcommand{\bignm}{\,\rule[-1.2ex]{.13em}{3.6ex}\,}
\author{Zhenghui Huo and Brett D. Wick}
\title[Weighted estimates of the Bergman projection with matrix weights]{Weighted estimates of the Bergman projection with matrix weights}
\begin{document}
	\thanks{BDW's research is partially supported by National Science Foundation grants DMS \# 1800057 and Australian Research Council DP 190100970.}
\address{Zhenghui Huo, Department of Mathematics and Statistics, The University of Toledo,  Toledo, OH 43606-3390, USA}
\email{zhenghui.huo@utoledo.edu}
\address{Brett D. Wick, Department of Mathematics and Statistics, Washington University in St. Louis,  St. Louis, MO 63130-4899, USA}
\email{bwick@wustl.edu}
		\newtheorem{thm}{Theorem}[section]
	\newtheorem{cl}[thm]{Claim}
	\newtheorem{lem}[thm]{Lemma}
		\newtheorem{ex}[thm]{Example}
	\newtheorem{de}[thm]{Definition}
		\newtheorem{prop}[thm]{Proposition}
	\newtheorem{co}[thm]{Corollary}
	\newtheorem*{thm*}{Theorem}
	\theoremstyle{definition}
		\newtheorem{rmk}[thm]{Remark}
		
	\maketitle
\begin{abstract}
We establish a weighted inequality for the Bergman projection with matrix weights for a class of pseudoconvex domains. We extend a result of Aleman-Constantin  and obtain the following estimate for the weighted norm of $P$:
\[\|P\|_{L^2(\Omega,W)}\leq C(\mathcal B_2(W))^{{2}}.\]
Here $\mathcal B_2(W)$  is the Bekoll\'e-Bonami constant for the matrix weight $W$ and $C$ is a constant that is independent of the weight $W$ but depends upon the dimension and the domain. 
\medskip

\noindent
{\bf AMS Classification Numbers}: 32A25, 32A36,  32A50, 42B20, 42B35

\medskip

\noindent
{\bf Key Words}: Bergman projection, Bergman kernel, weighted inequality
\end{abstract}

\section{Introduction}
The purpose of this paper is to investigate weighted inequalities with matrix weights for the Bergman projection on certain class of pseudoconvex domains. 

In harmonic analysis, weighted inequalities characterize weighted spaces on which singular integral operators remain bounded and describe the norm dependence on the weights. For the case of the scalar-valued function with scalar weights, it is well-known that for $1<p<\infty$, a Calderon-Zygmund singular integral operator is bounded over the weighted space $L^p(\mathbb R^N,W)$ with  the weight function  $W$ satisfying the Muckenhoupt $\mathcal A_p$ condition, i.e. the $\mathcal A_p$ constant 
\[\mathcal A_p(W):=\sup_{B}\langle W\rangle^\frac{1}{p}_{B}\langle W^{-\frac{1}{p-1}}\rangle^{\frac{1}{p^\prime}}_{B}<\infty.\]
Here the supremum is taken over all Euclidean balls $B$ in $\mathbb R^N$ and $\langle W\rangle_B$ denotes the average of $W$ over the set $B$ with respect to the Lebesgue measure.  See for example \cite{HMW}.

As to the dependence of  the norm of the operator on $\mathcal A_p(W)$, extensive studies have been made in recent decades. For $p=2$, the $\mathcal A_2$ conjecture speculates that the dependence is linear. The conjecture was solved by Wittwer for the martingale transform  \cite{Wittwer}, by  Petermichl-Volberg for the Beurling operator, by Petermichl for the Hilbert transform in \cite{Petermichl1}, and then the general Calderon-Zygmund  operators in \cite{Hytonen}. More elementary proofs were also obtained by Lerner \cite{Lerner13} and Lacey \cite{Lacey2017}.

One direction to extend weighted theory  considers the setting of weighted $L^p$ space of vector-valued functions with matrix-valued weight $W$.
Let $\Omega$ be a domain. Let $W$ be a locally integrable function on $\Omega$ with its range in the set of positive-semidefinite $d\times d$ matrices. The weighted space $L^2(\Omega, W)$ is the space of  vector-valued measurable functions $f:\Omega\to \mathbb C^d$ for which 
\[\|f\|^2_{L^2(\Omega,W)}:=\int_{\Omega}\langle W(z) f(z),f(z)\rangle dV(z)<\infty;\]
where $\langle\cdot,\cdot\rangle$ is the usual inner product of $\mathbb C^d$.

For the vector-valued function setting with matrix weights, Treil and Volberg \cite{TV1997,TV97} first gave a matrix $\mathcal A_2$ condition and showed it is a necessary and sufficient for the boundedness of the Hilbert transform \cite{TV1997,TV97}. Then Nazarov-Treil \cite{NT96} and Volberg  \cite{Volberg97} separately  generalized this result  and established the boundedness of classical Calderon-Zygmund operators for  matrix $\mathcal A_p$ weights. Christ-Goldberg \cite{CG01} and Goldberg \cite{Goldberg03} studied a class of weighted, vector analogues of the Hardy-Littlewood maximal function and used them to establish the boundedness of a class of singular integral operators on $L^p(W)$, where $W$ is a matrix $\mathcal A_p$ weight. Recently,  progress has also been made on norm dependence. Bickel-Wick \cite{BW2016} and Isralowitz-Kwon-Pott \cite{IKP} separately showed the following estimates for a sparse operator $S$:
\[\|S\|_{L^2(W)}\lesssim \mathcal A_2(W)^{\frac{3}{2}}.\]
Bickel-Petermichl-Wick \cite{BPW} obtained weighted $L^2$ estimates for the Hilbert transform with the norm bound $\mathcal A_2(W)^{\frac{3}{2}}\log(1+ \mathcal A_2(W))$.  Nazarov-Petermichl-Treil-Volberg \cite{NPTV} then improved their result and obtained a better bound $\mathcal A_2(W)^{\frac{3}{2}}$ for Calderon-Zygmund operators as well as Haar shifts and paraproducts. The sharp bound was established to some particular cases such as sparse operators with a simple sparse family by Nazarov-Petermichl-Treil-Volberg \cite{NPTV} and the dyadic square function by Hyt\"onen-Petermichl-Volberg  \cite{HPV}. It is worth noting that a crucial ingredient in some of these results is the reverse H\"older inequality for a scalar $\mathcal A_2$ weight.

Let $\Omega\subseteq \mathbb C^n$ be a bounded domain. Let $dV$ denote the Lebesgue measure. The Bergman projection $P$ is the orthogonal projection from $L^2(\Omega)$ onto the Bergman space $A^2(\Omega)$, the space of all square-integrable holomorphic functions. Associated with $P$, there is a unique function $K_\Omega$ on $\Omega\times\Omega$ such that for any $f\in L^2(\Omega)$:
\begin{equation}
P(f)(z)=\int_{\Omega}K_\Omega(z;\bar w)f(w)dV(w).
\end{equation}
Let $P^+$ denote the positive Bergman operator defined by:
\begin{equation}
P^+(f)(z):=\int_{\Omega}|K_\Omega(z;\bar w)|f(w)dV(w).
\end{equation}
In \cite{BB78,Bekolle}, Bekoll\'e and Bonami established the analogue of Muckenhoupt $\mathcal A_p$ condition in the Bergman setting:
\begin{thm}[Bekoll\'e-Bonami]
	Let $T_z$ denote the Carleson tent over $z$ in $\mathbb B_n$ defined as below:
	\begin{itemize}
		\item $T_z:=\left\{w\in \mathbb B_n:\left|1-\bar w\frac{z}{|z|}\right|<1-|z|\right\}$ for $z\neq 0$, and
		\item $T_z:= \mathbb B_n$ for $z=0$.
	\end{itemize} 	Let the weight $W$ be a positive, locally integrable function on $\mathbb B_n$. Let $1<p<\infty$. Then the following conditions are equivalent:
	\begin{enumerate}[label=\textnormal{(\arabic*)}]
		\item $P:L^p(\mathbb B_n,W)\to L^p(\mathbb B_n,W)$ is bounded;
		\item $P^+:L^p(\mathbb B _n,W)\to L^p(\mathbb B_n,W)$ is bounded;
		\item The Bekoll\'e-Bonami constant $\mathcal B_p(W)$ is finite where: $$\mathcal B_p(W):=\sup_{z\in \mathbb B_n}\langle W\rangle_{T_z}^{\frac{1}{p}}\langle W^{-\frac{1}{p-1}} \rangle^{\frac{1}{p^\prime}}_{T_z}.$$
	\end{enumerate} 
\end{thm}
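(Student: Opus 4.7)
The plan is to prove the cycle (2) $\Rightarrow$ (1) $\Rightarrow$ (3) $\Rightarrow$ (2). The first implication is immediate from the pointwise domination $|P(f)(z)| \le P^+(|f|)(z)$.

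For (1) $\Rightarrow$ (3), one tests the boundedness of $P$ against $f_{z_0} = \chi_{T_{z_0}}\sigma$ (suitably truncated to ensure $L^p(W)$-integrability), where $\sigma := W^{-1/(p-1)}$. The key input is the lower bound $\mathrm{Re}\, K_{\mathbb{B}_n}(z, \bar w) \gtrsim V(T_{z_0})^{-1}$ on a suitable subregion of $T_{z_0}$, which in the ball can be read off the explicit kernel $K_{\mathbb{B}_n}(z,\bar w) = c_n(1-\bar w\cdot z)^{-(n+1)}$ by checking that the argument of $1-\bar w\cdot z$ stays bounded away from a half-rotation when $z,w$ both lie in a cone over the same boundary direction. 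This gives $|P(f_{z_0})(z)| \gtrsim \langle \sigma\rangle_{T_{z_0}}$ on the subregion, and unpacking $\|P f_{z_0}\|_{L^p(W)} \le C\|f_{z_0}\|_{L^p(W)}$ together with the identity $\|f_{z_0}\|_{L^p(W)}^p = V(T_{z_0})\langle\sigma\rangle_{T_{z_0}}$ yields $\langle W\rangle_{T_{z_0}}^{1/p}\langle\sigma\rangle_{T_{z_0}}^{1/p'} \le C$ uniformly in $z_0$.

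The substantive direction is (3) $\Rightarrow$ (2). I would build a dyadic tree $\mathcal{D}$ of Carleson tents on $\mathbb{B}_n$ (via a Whitney-type decomposition of the boundary sphere lifted to tents) enjoying the standard nesting and doubling properties, and establish the pointwise domination
\[
P^+ f(z) \;\lesssim\; \sum_{T \in \mathcal{D},\, z \in T} \frac{1}{V(T)} \int_T |f|\, dV,
\]
based on the kernel estimate $|K_{\mathbb{B}_n}(z,\bar w)| \lesssim V(T)^{-1}$ when $T$ is the smallest dyadic tent containing both $z$ and $w$. The right-hand side is in turn controlled by the dyadic Bekoll\'e-Bonami maximal operator $M_\mathcal{D} f(z) := \sup_{z \in T \in \mathcal{D}}\langle |f| \rangle_T$, whose $L^p(W)$ boundedness follows from $\mathcal{B}_p(W) < \infty$ by a stopping-time/Calder\'on-Zygmund decomposition on level sets, in direct analogy with the classical Muckenhoupt theorem on $\mathbb{R}^N$.

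The hardest part, I expect, is the geometric/kernel input behind (3) $\Rightarrow$ (2): constructing a dyadic tree of Carleson tents with clean adjacency and doubling, and showing that $|K_{\mathbb{B}_n}|$ really is controlled by the dyadic sum above. Both require using the Kor\'anyi nonisotropic quasi-metric on the sphere rather than Euclidean distance, so one must set up the Whitney decomposition carefully and exploit the off-diagonal decay $|K_{\mathbb{B}_n}(z,\bar w)| \sim |1-\bar w\cdot z|^{-(n+1)}$ together with the fact that $V(T_{z_0}) \sim (1-|z_0|)^{n+1}$. Once these ingredients are in place, the weighted maximal-function theory and the resulting bound on $P^+$ follow the classical template.
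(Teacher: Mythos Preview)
The paper does not prove this statement. The Bekoll\'e--Bonami theorem is quoted in the introduction as a known background result, with attribution to \cite{BB78,Bekolle}; no argument is supplied. So there is nothing in the paper to compare your proposal against.

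That said, one step in your sketch is genuinely wrong as written. In the direction (3) $\Rightarrow$ (2) you dominate $P^+f(z)$ by the dyadic positive operator
\[
\mathcal{A}f(z)=\sum_{T\in\mathcal D,\ z\in T}\langle |f|\rangle_T,
\]
and then claim this is ``in turn controlled by the dyadic Bekoll\'e--Bonami maximal operator $M_{\mathcal D}f(z)=\sup_{z\in T\in\mathcal D}\langle |f|\rangle_T$.'' Pointwise this is false: the sum over all dyadic tents containing $z$ is not bounded by the supremum (there are infinitely many terms, each possibly comparable to the sup). The operator $\mathcal A$ is strictly larger than $M_{\mathcal D}$, and bounding $M_{\mathcal D}$ on $L^p(W)$ does not automatically bound $\mathcal A$. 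What actually works is to bound $\mathcal A$ directly on $L^p(W)$ via a duality/Carleson-embedding argument (pair against $g\in L^{p'}(\sigma)$, use that the kubes $K=T\setminus\bigcup T'$ satisfy $|K|\approx|T|$, and invoke the weighted maximal inequality on each factor), or to run the Sawyer testing machinery for positive dyadic operators. Either route uses $\mathcal B_p(W)<\infty$, but neither reduces to a pointwise comparison with $M_{\mathcal D}$. The rest of your outline---the trivial (2) $\Rightarrow$ (1), the testing argument for (1) $\Rightarrow$ (3), and the nonisotropic dyadic structure underlying the kernel domination---is a reasonable sketch of a modern proof.
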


People have made progress on the dependence of the operator norm $\|P\|_{L^p(\mathbb B_n,W)}$ on $\mathcal B_{p}(W)$. In \cite{Pott}, Pott and Reguera gave a weighted $L^p$ estimate for the Bergman projection on the upper half plane. Their estimates are in terms of the Bekoll\'e-Bonami constant and the upper bound is sharp. Later, Rahm, Tchoundja, and Wick \cite{Rahm} generalized the results of Pott and Reguera to the unit ball case, and also obtained estimates for the Berezin transform. Sharp weighted norm estimates of the Bergman projection have been obtained \cite{ZhenghuiWick2} on the Hartogs triangle and a broad class of pseudoconvex domains \cite{HWW,HWW2,Hu}.

 Unlike Muckenhoupt $\mathcal A_2$ weights, the reverse H\"older inequality is generally not available  for B\'ekoll\'e-Bonami weights, thereby making weighted inequalities for the Bergman projection harder to prove in the matrix weight setting. Nevertheless, Aleman and Constantin \cite{AC} extended Bekoll\'e and Bonami's result on the unit disc to operator-valued weights for $p=2$. They showed that a $\mathcal B_2$ condition for operator weights determines the boundedness of the Bergman projection in vector-valued $L^2$-spaces with operator-valued weights as opposed to just matrix valued weights (i.e. they consider the action on an infinite dimensional Hilbert space as opposed to just $\mathbb C^d$). They  also obtained a weighted norm estimate for the projection:
\[\|P\|_{L^2(\mathbb D, W)}\lesssim \mathcal B^{5/2}_2(W).\]
In their approach, they related the norms of analytic functions in weighted Bergman spaces to weighted norms of their derivatives, which relied on various properties of  the unit disc $\mathbb D$. 

In this paper, we consider the weighted estimates of the Bergman projection in the matrix weight case.  We follow the settings from \cite{HWW,HWW2} and the domain $\Omega$ we consider  belong to one of the following classes:
\begin{itemize}
	\item a bounded, smooth, pseudoconvex domain of finite type in $\mathbb C^2$, \item a bounded, smooth, strictly pseudoconvex domain in $\mathbb C^n$, 
	\item a bounded, smooth, convex domain of finite type in $\mathbb C^n$, or
	\item a bounded, smooth, decoupled domain of finite type in $\mathbb C^n$.
\end{itemize}
Such a $\Omega$ is refered to as a ``simple domain''  by McNeal in \cite{McNeal2003}. We will use the same terminology in this paper. 

Let $\Omega$ be a simple domain.
The main result can be summarized as follows:
\begin{thm}\label{thm1.2}
	 Let $W$ be a matrix $\mathcal B_2$ weight on $\Omega$. Let $\mathcal B_2(W)$ denote the matrix $\mathcal B_2$ constant  given by
	 \[\mathcal B_2(W):=\sup_{B\in \mathscr{B}}\bignm\langle W\rangle^{1/2}_{B}\langle W^{-1}\rangle^{1/2}_{B}\bignm^2,\]
	 where $\nm\cdot\nm$ denotes the norm of the matrix acting on $\mathbb C^d$, $\langle W\rangle_B:=\frac{\int_BWdV}{\int_BdV}$, and $\mathscr B$ is a collection of tents in $\Omega$ that touch its boundary. Then we have \[\|P\|_{L^2(\Omega,W)}\leq C(\mathcal B_2(W))^{{2}}.\]
The constant  $C$ in the inequality only depends on the domain $\Omega$ and the dimension $d$, but not the weight  $W$.
\end{thm}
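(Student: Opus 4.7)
The strategy is to dominate the Bergman projection $P$ by a positive sparse-type operator adapted to the dyadic tent geometry of $\Omega$, and then to prove a matrix-weighted bound for this sparse operator by introducing reducing operators. The overall blueprint parallels the scalar Bekoll\'e--Bonami case for simple domains treated in \cite{HWW,HWW2}, but the matrix structure must be handled without appealing to a reverse H\"older inequality, which is unavailable for Bekoll\'e--Bonami weights.

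First, I would work at the level of bilinear forms. Using the off-diagonal Bergman kernel estimates valid on simple domains (established in \cite{HWW,HWW2} and the references therein), the absolute value of the kernel $|K_{\Omega}(z;\bar w)|$ is pointwise dominated by a sum of the form $\sum_{B\in\mathscr{D}}|B|^{-1}\chi_B(z)\chi_B(w)$, where $\mathscr{D}$ is a dyadic system of Carleson-type tents in $\Omega$. This yields the bilinear control
\[
|\langle Pf,g\rangle_{L^2(\Omega,\mathbb{C}^d)}|\lesssim \sum_{B\in\mathscr{D}}\frac{1}{|B|}\int_B\|f\|_{\mathbb{C}^d}\,dV\int_B\|g\|_{\mathbb{C}^d}\,dV,
\]
which via the usual stopping-time selection is further dominated by a sparse form indexed by a sparse subfamily $\mathcal{S}\subset\mathscr{D}$.

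Next I would establish the matrix-weighted bound $\|\mathcal{A}_{\mathcal{S}}\|_{L^2(\Omega,W)\to L^2(\Omega,W)}\lesssim \mathcal{B}_2(W)^2$ for the resulting sparse operator $\mathcal{A}_{\mathcal{S}}$. For each tent $B\in\mathcal{S}$ introduce the \emph{reducing operators} $\mathcal{W}_B$ and $\mathcal{V}_B$: the positive-definite $d\times d$ matrices characterized by $\nm\mathcal{W}_B e\nm^2=\langle\langle W(\cdot)e,e\rangle\rangle_B$ and $\nm\mathcal{V}_B e\nm^2=\langle\langle W^{-1}(\cdot)e,e\rangle\rangle_B$ for all $e\in\mathbb{C}^d$; these satisfy the comparison $\bignm\mathcal{W}_B\mathcal{V}_B\bignm^2\lesssim \mathcal{B}_2(W)$. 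After substituting $f\mapsto W^{-1/2}f$ and $g\mapsto W^{-1/2}g$ and inserting the identities $\mathcal{W}_B^{-1}\mathcal{W}_B$ and $\mathcal{V}_B^{-1}\mathcal{V}_B$ into each term of the sparse form, the matrix-weighted bilinear estimate reduces to a scalar bound of the shape
\[
\sum_{B\in\mathcal{S}}\bignm\mathcal{W}_B\mathcal{V}_B\bignm^2\,\langle\nm\mathcal{W}_B^{-1}W^{1/2}f\nm\rangle_B\,\langle\nm\mathcal{V}_B^{-1}W^{-1/2}g\nm\rangle_B\,|B|,
\]
which is controlled by $\mathcal{B}_2(W)^2\|f\|_{L^2}\|g\|_{L^2}$ using Cauchy--Schwarz, sparseness of $\mathcal{S}$ (Carleson embedding), and the uniform $L^2$-boundedness of the scalar dyadic maximal functions built from the reducing-operator-adjusted vectors.

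The principal obstacle, and the source of the exponent $2$ rather than the exponent $1$ valid in the scalar simple-domain case \cite{HWW,HWW2}, is the absence of a reverse H\"older inequality for matrix Bekoll\'e--Bonami weights. In the Muckenhoupt setting reverse H\"older provides a small $L^p$-integrability gain for $p$ slightly larger than $1$, which allows one to save a factor of $\mathcal{B}_2(W)^{1/2}$ in the matrix sparse estimate; in the Bergman/Bekoll\'e--Bonami setting no such gain is known, forcing the extra factor of $\mathcal{B}_2(W)$. A secondary technical point is verifying that the dyadic tent decomposition and kernel size estimates from \cite{HWW,HWW2} transfer uniformly across all four classes of simple domains listed above, and that the reducing operators may be chosen measurably so that the Carleson embedding applies tent-by-tent.
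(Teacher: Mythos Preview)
Your outline diverges from the paper's argument at the decisive point, and as written it has a genuine gap.

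You correctly observe that Bekoll\'e--Bonami weights need not satisfy a reverse H\"older inequality, and you then propose to bypass this by a direct reducing-operator argument. But the very step you describe---``controlled by $\mathcal{B}_2(W)^2\|f\|_{L^2}\|g\|_{L^2}$ using Cauchy--Schwarz, sparseness of $\mathcal{S}$ (Carleson embedding), and the uniform $L^2$-boundedness of the scalar dyadic maximal functions''---is precisely where reverse H\"older (or an equivalent $L^p$ self-improvement) is normally used. Concretely: the only dimension-free way to control $\langle\nm\mathcal W_B^{-1}W^{1/2}f\nm\rangle_B$ without a higher-integrability gain is via Cauchy--Schwarz, giving a bound by $C_d\,\langle\nm f\nm^{2}\rangle_B^{1/2}$; but then the resulting sum $\sum_{B}\langle\nm f\nm^{2}\rangle_B\,|B|$ is \emph{not} dominated by $\|f\|_{L^2}^2$, because the dyadic tents are fully nested and the Carleson embedding (Lemma~\ref{lem2.16}) requires an exponent strictly greater than $1$. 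The ``uniform $L^2$-boundedness'' of the auxiliary maximal functions you invoke is not available with bounds independent of $\mathcal B_2(W)$ in this setting. (There is also a minor issue: inserting $\mathcal W_B^{-1}\mathcal W_B$ and $\mathcal V_B^{-1}\mathcal V_B$ produces a single factor $\nm\mathcal W_B\mathcal V_B\nm\lesssim\mathcal B_2(W)^{1/2}$, not $\nm\mathcal W_B\mathcal V_B\nm^{2}$.)

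The paper's route is different and supplies exactly the missing ingredient. Rather than work with $W$ directly, the paper replaces $W$ by the \emph{step averaging weight} $\tilde{\mathcal W}$, constant on dyadic kubes (Definition~\ref{de4.1}). Two things are then proved: (i) the passage costs only $\mathcal B_2(W)^{1/2}$, namely $\|P\|_{L^2(W)}\lesssim\mathcal B_2(W)^{1/2}\|P\|_{L^2(\tilde{\mathcal W})}$ (Theorem~\ref{thm4.3}), and this step uses holomorphicity of $Pf$ and subharmonicity of $\langle W(\zeta)Pf,Pf\rangle$ (Lemma~\ref{lem4.4}); (ii) because $\mathcal W$ is constant on kubes, the scalar weights $\langle\mathcal W v,v\rangle$ \emph{do} satisfy a reverse H\"older inequality on tents with exponent $r-1\approx\mathcal B_2(W)^{-1}$ (Lemma~\ref{lem4.5}). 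With reverse H\"older in hand, the convex-body domination (Theorem~\ref{thm2.14}) and the square-function estimate of Section~5 yield $\|P\|_{L^2(\mathcal W)}\lesssim\mathcal B_2(W)^{3/2}$, and combining gives the exponent $2$. The idea you are missing is that reverse H\"older is \emph{recovered} by discretising the weight, and the extra half-power is the price of that discretisation.
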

This theorem extends and improves the estimates by Aleman-Constantin \cite{AC} by lowering the constant power of $\mathcal B_2(W)$ in the upper bound and showing it works for more domains.
See Section 2.2 for the detailed definition of the tents of $\Omega$ and the collection $\mathscr{B}$. 

Our approach is motivated by ideas from \cite{AC,NPTV,Aleman} and can be outlined as follows:
Given  a matrix $\mathcal B_2$ weight $W$, we construct finitely many  step averaging weights $\mathcal W_l$ {according to dyadic systems $\mathcal T_l$} and their sum $\tilde{\mathcal W}=\sum\mathcal W_l$ with the following three properties: 
\begin{enumerate}
	\item $\sup_{\hat K^k_j\in \mathcal T_l}\nm\langle \mathcal W_l^{-1}\rangle^{1/2}_{\hat K^k_j}\langle \mathcal W_l\rangle^{1/2}_{\hat K^k_j}\nm^2\lesssim \mathcal B_2(W)$ (Lemma \ref{lem4.2}),
	\item for $v\in \mathbb C^d$ and each $\mathcal W_l$, the scalar weight $\langle \mathcal W_lv,v\rangle$ satisfies the reverse H\"older inequality (Lemma \ref{lem4.4}),
	\item \label{(3)}$\|P\|_{L^2(W)}\lesssim \mathcal B^{1/2}_2(W)\|P\|_{L^2(\tilde{\mathcal W})}$ (Theorem \ref{thm4.3}).
	\end{enumerate}
The construction of the step averaging  weight $\mathcal W_l$ is inspired by the work of \cite{Aleman}. Property (\ref{(3)}) above uses a duality argument from \cite{AC} and  relates the weighted norm of $P$ on $L^2(W)$ to the ones on $L^2(\mathcal W_l)$.  Using the known estimates about the Bergman kernel on simple domains, we show that the Bergman projection belongs to a convex body-valued dyadic operator \cite{NPTV}. Since the reverse H\"older is available for $\langle \mathcal W_lv,v\rangle$, we are able to establish weighted estimates using some scalar dyadic square operators. 

After the first draft of this paper was posted, we received a draft by Limani and Pott \cite{LP21} where they considered the Bergman projection on the upper half plane and obtained weighted estimates of the projection with operator-valued weights. Their results sharpen the exponent of 2 in the estimate of Theorem \ref{thm1.2} to $\frac{3}{2}$.

Our paper is organized as follows: In Section 2, we recall the definitions and known results about  the dyadic tent structure of $\Omega$,  estimates for the Bergman kernel function.  In Section 3, we give the definition of the convex body-valued sparse operator and show the convex body domination for the Bergman projection.  In Section 4, we introduce the weight $\mathcal W$ and go over its properties. In Section 5, we prove Theorem \ref{thm1.2}. We make several remarks for our results in Section 6.

Throughout the paper, $n$ will be the dimension of the complex Euclidean space $\mathbb C^n$ that contains the domain $\Omega$ of vector-valued functions and $d$ will be the dimension of $\mathbb C^d$ in which vector-valued functions take their range. 
Hence the matrix weights will have range in the set of $d\times d$ matrices. We use the notation $\|\cdot\|$ for the norm of operators/functions on function spaces, and use $\nm\cdot\nm$ for the  norm of the matrix acting on $\mathbb C^d$ or the length of a vector in $\mathbb C^d$. Given functions of several variables $f$ and $g$, we use $f\lesssim g$ to denote that $f\leq Cg$ for a constant $C$. If $f\lesssim g$ and $g\lesssim f$, then we say $f$ is comparable to $g$ and write $f\approx g$. 
\section{Preliminaries}

\subsection{Balls on the boundary of a simple domain} Let $\Omega$ be a bounded domain in $\mathbb C^n$ with smooth boundary $\mathbf b\Omega$. Then there is a smooth function $\rho$ satisfying  $\Omega=\{z\in \mathbb C^n:\rho(z)<0\}$ and  $\rho\neq 0$ on $\mathbf b\Omega$. We call such a function $\rho$ a \emph{defining function} of $\Omega$.
$\Omega$ is a \emph{pseudoconvex domain} if for any $p\in \mathbf b\Omega$, the complex Hessian $(\rho_{i\bar j})$ of the defining function is positive semi-definite on holomorphic tangent space  $T^{(1,0)}_p(\mathbf b\Omega)$ at $p$. For $m>0$, a point $p\in \mathbf b\Omega$ is of \emph{finite type}  in the sense of D'Angelo \cite{D'Angelo82} if the maximum order of contact of one-dimensional complex analytic varieties with $\mathbf \Omega$ at $p$ equals $m$. We say a domain $\Omega$ is of  \emph{finite type} if every boundary point $p$ is  of finite type.
 
When $\Omega$ is a simple domain  in $\mathbb C^n$, non-isotropic sets can be constructed using a special  coordinate system near the boundary of $\Omega$.  See results of McNeal \cite{McNeal2,McNeal91,McNeal2003}. Let $p\in \mathbf b\Omega$ be a point of finite type $m$. For a small neighborhood $U$ of the point $p$, there exists a holomorphic coordinate system $z=(z_1,\dots,z_n)$ centered at a point $q_0\in U$ and defined on $U$ and quantities $\tau_1(q,\delta), \tau_2(q,\delta),\dots, \tau_n(q,\delta)$ for each $q\in U$ such that 
\begin{align}\label{2.10}\;\;\;\;\;\;\tau_1(q,\delta)=\delta\;\;\; \text{ and }\;\;\;\delta^{1/2}\lesssim\tau_j(q,\delta)\lesssim\delta^{1/m}\;\;\text{ for }\;\; j=2,3,\dots,n.\end{align}
Moreover, the polydisc $D(q,\delta)$ defined by:
\begin{align}
D(q,\delta)=\{z\in\mathbb C^n:|z_j|<\tau_j(q,\delta),j=1,\dots,n\}
\end{align}
is the largest one centered at $q$ on which the defining function $\rho$ changes by no more than $\delta$ from its value at $q$, i.e. if $z\in D(q,\delta)$, then $|\rho(z)-\rho(q)|\lesssim \delta$.

The polydisc $D(q,\delta)$ is known to satisfy several ``covering properties'' \cite{McNeal3}: \begin{enumerate}\item There exists a constant $C>0$, such that for points $q_1,q_2\in U\cap \Omega$ with $D(q_1,\delta)\cap D(q_2,\delta)\neq \emptyset$, we have
	\begin{align}\label{2.11}
	D(q_2,\delta)\subseteq CD(q_1,\delta) \text{ and }  D(q_1,\delta)\subseteq CD(q_2,\delta).
	\end{align}
	\item There exists a constant $c>0$ such that for $q\in U\cap \Omega$ and $\delta>0$, we have \begin{align}\label{2.12}D(q,2\delta)\subseteq cD(q,\delta).\end{align}\end{enumerate}
It was also shown in \cite{McNeal3} that $D(p,\delta)$ induces a global quasi-metric on $\Omega$. Here we will use it to define a quasi-metric on $\mathbf b\Omega$.

For $q\in \mathbf b\Omega$ and $\delta>0$, we define the non-isotropic ball of radius $\delta$ to be the set
\[ B(q,\delta)={D(q,\delta)}\cap\mathbf b\Omega.\]
Set containments (\ref{2.11}), (\ref{2.12}), and the compactness and smoothness of $\mathbf b\Omega$  imply the following properties for the balls:
\begin{enumerate}\item There exists a constant $C$ such that for $q_1,q_2\in U\cap \mathbf b\Omega$ with $B(q_1,\delta)\cap B(q_2,\delta)\neq \emptyset$, \begin{align}\label{2.14}
	B(q_2,\delta)\subseteq CB(q_1,\delta) \text{ and }  B(q_1,\delta)\subseteq CB(q_2,\delta).
	\end{align} \item Let $\mu$ denote the Lebesgue surface area on $\mathbf b\Omega$. There exists a constant $c>0$ such that for $q\in U\cap \mathbf b\Omega$ and $\delta>0$, we have \begin{align}\label{2.15}B(q,\delta)\subseteq cB(q,\delta/2)\;\;\;\;\;\text{ and }\;\;\;\;\;\mu(B(q,\delta))\approx \prod_{j=2}^{n}\tau_j^2(q,\delta).\end{align} \end{enumerate}

The balls $B$ induce a quasi-metric on $\mathbf b\Omega\cap U$. For $q,p\in \mathbf b \Omega\cap U$, we set
$\tilde d(q,p)=\inf\{\delta>0:p\in B(q,\delta)\}.$ Note that $\mathbf b\Omega$ is coompact. To extend this quasi-metric $\tilde d(\cdot,\cdot)$ to a global quasi-metric $d(\cdot,\cdot)$ defined on $\mathbf b\Omega\times\mathbf b\Omega$, one can just patch the local metrics together in an appropriate way. The resulting quasi-metric is not continuous, but satisfies all the relevant properties. We refer the reader to \cite{McNeal3} for more details on this matter. Since $d(\cdot,\cdot)$ and $\tilde d(\cdot,\cdot)$ are equivalent, we may abuse the notation $B$ for the ball on the boundary induced by $d$. Then (\ref{2.14}) and (\ref{2.15}) still hold true for $B$.

\subsection{Dyadic tents on $\Omega$ and the matrix $\mathcal B_2$ constant} The non-isotropic ball $B(p,\delta)$ on the boundary $\mathbf b\Omega$ induces ``tents'' in the domain $\Omega$.
To define what ``tents'' are we need the orthogonal projection map near the boundary. Let $\operatorname{dist}(\cdot,\cdot)$ denote the Euclidean distance in $\mathbb C^n$. For small $\epsilon>0$, set \begin{align*}&N_{\epsilon}(\mathbf b\Omega)=\{w\in \mathbb C^n: \operatorname{dist}(w,\mathbf b\Omega)<\epsilon\}.\end{align*}
\begin{lem}\label{lem3.2}
	For sufficiently small $\epsilon_0>0$, there exists a map $\pi:N_{\epsilon_0}(\mathbf b\Omega)\to \mathbf b\Omega$  such that
	\begin{enumerate}[label=\textnormal{(\arabic*)}]
		\item For each point $z\in N_{\epsilon_0}(\mathbf b\Omega)$ there exists a unique point $\pi(z)\in \mathbf b\Omega$ such that \[\operatorname{dist}(z,\pi(z))=\operatorname{dist}(z,\mathbf b\Omega).\]
		\item For $p\in \mathbf b\Omega$, the fiber $\pi^{-1}(p)=\{p-\epsilon n(p): -\epsilon_0< \epsilon<\epsilon_0\}$ where $n(p)$ is the outer unit normal vector of $\mathbf b\Omega$ at point $p$.
		\item The map $\pi$ is smooth on $N_{\epsilon_0}(\mathbf b\Omega)$.
		\item If the defining function $\rho$ is the signed distance function to the boundary, the gradient $\nabla\rho$ satisfies \[\nabla\rho(z)=n(\pi(z)) \;\text{ for } \;z\in N_{\epsilon_0}(\mathbf b\Omega).\]
	\end{enumerate}
\end{lem}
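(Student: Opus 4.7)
The plan is to prove Lemma~\ref{lem3.2} via a standard tubular neighborhood construction applied to the smooth compact hypersurface $\mathbf b\Omega\subset\mathbb R^{2n}\cong\mathbb C^n$. First I would introduce the normal map
\[\Phi\colon \mathbf b\Omega\times(-\epsilon,\epsilon)\to\mathbb R^{2n},\qquad \Phi(p,t)=p-t\,n(p),\]
where $n(p)$ denotes the outer unit normal of $\mathbf b\Omega$ at $p$. Both $p\mapsto n(p)$ and $\Phi$ are smooth. At a boundary point $(p,0)$, the derivative $D\Phi_{(p,0)}$ restricts to the identity on $T_p\mathbf b\Omega$ and sends $\partial_t$ to $-n(p)$; since $n(p)$ is transverse to $T_p\mathbf b\Omega$, this derivative is a linear isomorphism, and the inverse function theorem provides a neighborhood of each $(p,0)$ on which $\Phi$ is a local diffeomorphism.

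To produce a uniform $\epsilon_0>0$, I would invoke compactness of $\mathbf b\Omega$. If no such $\epsilon_0$ existed, one could extract a sequence of distinct pairs $(p_k,t_k)\neq(p'_k,t'_k)$ with $\Phi(p_k,t_k)=\Phi(p'_k,t'_k)$ and $t_k,t'_k\to 0$; compactness would then produce a common limit at which local injectivity fails, contradicting the previous step. Hence we may fix $\epsilon_0>0$ so that $\Phi$ is a diffeomorphism from $\mathbf b\Omega\times(-\epsilon_0,\epsilon_0)$ onto an open neighborhood of $\mathbf b\Omega$, and after shrinking further this image coincides with $N_{\epsilon_0}(\mathbf b\Omega)$. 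Setting $\pi:=\mathrm{pr}_1\circ\Phi^{-1}$ immediately delivers properties~(2) and~(3): the fibers are normal segments by construction, and $\pi$ inherits smoothness from $\Phi^{-1}$. For property~(1), I would expand $q\mapsto|z-q|^2$ in a Taylor series around $q=\pi(z)$, using that $z-\pi(z)$ is normal to $\mathbf b\Omega$ there and that, provided $\epsilon_0$ lies below the reciprocal of the maximum principal curvature of $\mathbf b\Omega$, the Hessian of this function on $T_{\pi(z)}\mathbf b\Omega$ remains positive definite. This makes $\pi(z)$ the unique nearest point to $z$ in $\mathbf b\Omega$.

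For property~(4), assume $\rho$ is the signed distance function with $\rho<0$ inside $\Omega$. Then $\rho(\Phi(p,t))=-t$ for every $p\in\mathbf b\Omega$ and $|t|<\epsilon_0$, so in particular $|\nabla\rho|\equiv 1$ on $N_{\epsilon_0}(\mathbf b\Omega)$, a standard property of the signed distance function. Differentiating $\rho(p-t\,n(p))=-t$ in $t$ at fixed $p$ yields $\nabla\rho(z)\cdot n(p)=1$, and since $|\nabla\rho(z)|=1$ the Cauchy-Schwarz equality case forces $\nabla\rho(z)=n(p)=n(\pi(z))$. The only genuinely delicate point in the whole scheme is the global injectivity of $\Phi$ and the attendant strict-minimum claim used for~(1); both reduce to choosing $\epsilon_0$ below the reach of $\mathbf b\Omega$, which is positive by compactness and smoothness of the boundary.
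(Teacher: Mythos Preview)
Your argument is correct and follows the standard tubular neighborhood construction for a smooth compact hypersurface; the paper itself does not prove this lemma but simply refers the reader to \cite{BaloghBonk}. Your explicit proof via the normal map $\Phi$, the inverse function theorem, compactness, and the signed-distance identity is a complete and self-contained substitute for that citation.
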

A proof of Lemma \ref{lem3.2} can be found in \cite{BaloghBonk}.
\begin{de}\label{de3.3}
	Let $\epsilon_0$ and $\pi$ be as in Lemma \ref{lem3.2}. For $z\in \mathbf b\Omega$ and sufficiently small  $\delta>0$, the ``tent'' $B^\#(z,\delta)$ over the ball $B(z,\delta)$ is defined to be the subset of $N_{\epsilon_0}(\mathbf b\Omega)$ as follows:
	When $\Omega$ is a simple domain in $\mathbb C^n$,
	\[B^\#(z,\delta):=\{w\in  \Omega: \pi(w)\in B(z,\delta), \operatorname{dist}(\pi(w),w)< \delta\}.\] 
	For $\delta\gtrsim 1$ and any  $z\in \mathbf b\Omega$, we set $B^\#(z,\delta)=\Omega$.
\end{de}
For the ``tent'' $B^\#(z,\delta)$ to be within $N_{\epsilon_0}(\mathbf b\Omega)$, the constant  $\delta$ in Definition \ref{de3.3} needs to satisfy  $\delta<\epsilon_0$.

Given a subset $U\in \mathbb C^n$, let $|U|$ denote the Lebesgue measure of $U$. By the definitions of the tents $B^\#(z,\delta)$ we have:
\begin{align}\label{3.1}
|B^\#(z,\delta)|\approx \delta^{2}\prod_{j=2}^{n}\tau^2_j(z,\delta)
\end{align}
and hence also the ``doubling property'':
\begin{align}\label{3.3}
|B^\#(z,\delta)|\approx |B^\#(z,\delta/2)|.
\end{align}

Now we are in the position of constructing dyadic systems on $\mathbf b\Omega$ and $\Omega$. 
Note that the ball $B(\cdot,\delta)$ on $\mathbf b\Omega$ satisfies the ``doubling property'' as in (\ref{2.15}). By (\ref{2.14}),  the surface area $\mu(B(q_1,\delta))\approx \mu(B(q_2,\delta))$ for any $q_1,q_2\in \mathbf b\Omega$ satisfying $d(q_1,q_2)\leq \delta$. Combining these facts yields that the metric $d(\cdot,\cdot)$ is a doubling metric, i.e. for every $q\in \mathbf b\Omega$ and $\delta>0$, the ball $B(q,\delta)$ can be covered by at most $M$ balls $B(x_i,\delta/2)$. Results of Hyt\"onen and Kairema in \cite{HK} then give the following lemmas:
\begin{lem}\label{lem3.5}Let $\delta$ be a positive constant that is sufficiently small and let $s>1$ be a parameter.
	There exist reference points $\{p_j^{(k)}\}$ on the boundary $\mathbf b\Omega$ and  an associated collection of subsets $\mathcal Q=\{Q_j^{k}\}$ of $\mathbf b\Omega$ with $p_j^{(k)}\in Q_j^{k}$ such that the following properties hold:
	\begin{enumerate}[label=\textnormal{(\arabic*)}]
		\item For each fixed $k$, $\{p_j^{(k)}\}$ is a largest set of points on $\mathbf b\Omega$ satisfying $d_1(p_j^{(k)},p_i^{(k)})> s^{-k}\delta$ for all $i,j$. In other words, if $p\in \mathbf b\Omega$ is a point that is not in $\{p_j^{(k)}\}$, then there exists an index $j_o$ such that $d_1(p,p_{j_o}^{(k)})\leq s^{-k}\delta$.
		\item For each fixed $k$, $\bigcup_j Q^k_j=\mathbf b\Omega$ and $Q^k_j\bigcap Q^k_i=\emptyset$ when $i\neq j$.
		\item For $k< l$ and any $i,j$, either $Q^k_j\supseteq Q^l_i$ or $Q^k_j\bigcap Q^l_i=\emptyset$.
		\item There exist positive constants $c$ and $C$ such that for all $j$ and $k$, \[B(p_j^{(k)},cs^{-k}\delta)\subseteq Q^k_j\subseteq B(p_j^{(k)},Cs^{-k}\delta).\]
		\item Each $Q_j^k$ contains at most $N$ numbers of $Q^{k+1}_i$. Here $N$ does not depend on $k, j$.
	\end{enumerate}
\end{lem}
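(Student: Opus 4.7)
The plan is to obtain Lemma \ref{lem3.5} as a direct application of the dyadic cube construction of Hyt\"onen--Kairema, which is itself a refinement of M.~Christ's classical dyadic cube theorem for spaces of homogeneous type. The hypotheses needed for that machinery are exactly the ones the text has just verified: $(\mathbf{b}\Omega, d)$ is a quasi-metric space, and the ball structure $B(\cdot,\delta)$ inherits the geometric doubling property from (\ref{2.14})--(\ref{2.15}), i.e.\ every ball of radius $\delta$ can be covered by a uniformly bounded number $M$ of balls of radius $\delta/2$.

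The first step is to fix the scale parameter $s>1$ and, for each integer $k$, select by Zorn's lemma (or a simple greedy procedure, since $\mathbf{b}\Omega$ is compact and hence totally bounded) a maximal $s^{-k}\delta$-separated set of points $\{p_j^{(k)}\}\subset\mathbf{b}\Omega$. Maximality immediately yields (1): any point not in the set must lie within $s^{-k}\delta$ of some $p_{j_o}^{(k)}$, otherwise it could be added to the set. The second step is to assign to each $p_j^{(k)}$ a preliminary ``Voronoi cell'' $\tilde Q_j^k$ consisting of points of $\mathbf{b}\Omega$ closer (in $d$) to $p_j^{(k)}$ than to any other center at that scale, with ties broken by a fixed well-ordering. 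This produces a partition of $\mathbf{b}\Omega$ at each level $k$, addressing (2), and by the separation/maximality properties one automatically obtains the two-sided ball inclusion
\[
B(p_j^{(k)}, \tfrac{1}{2}s^{-k}\delta) \subseteq \tilde Q_j^k \subseteq B(p_j^{(k)}, s^{-k}\delta),
\]
which is a first version of (4).

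The third step, which is the heart of the Hyt\"onen--Kairema refinement, is to enforce the nesting property (3). For this I would link each center $p_i^{(l)}$ at scale $l$ to a unique ``parent'' center $p_{j}^{(k)}$ at the coarser scale $k<l$, chosen as the nearest center at level $k$ (again breaking ties by the fixed well-ordering); then one redefines $Q_j^k$ to be the union of $\tilde Q_i^l$ over all descendants $p_i^{(l)}$ of $p_j^{(k)}$ as $l\to\infty$, which by construction is either contained in or disjoint from any $Q_i^l$ at a finer level. A short argument using $s>1$ and the quasi-triangle inequality shows that the modified cubes still satisfy (4) with adjusted constants $c$ and $C$ depending only on $s$ and the quasi-metric constant of $d$. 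Property (2) is preserved because at each scale the new $Q_j^k$'s form a disjoint partition of $\mathbf{b}\Omega$ inherited from the partition of the maximal level.

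Finally, for (5) I would invoke the doubling property directly: the children of $Q_j^k$ correspond to centers $p_i^{(k+1)}$ whose cell is contained in $Q_j^k\subseteq B(p_j^{(k)},Cs^{-k}\delta)$, so these centers all lie in a ball of radius $Cs^{-k}\delta$ and are mutually $s^{-(k+1)}\delta$-separated; the geometric doubling property of $(\mathbf{b}\Omega,d)$ then bounds their number by a constant $N$ depending only on $M$, $s$, and the quasi-metric constant, but not on $k$ or $j$. The main obstacle, and the reason the result is nontrivial, is carrying out step three carefully so that nesting and the sandwich bounds (4) coexist with a uniform choice of constants $c, C$; this is exactly the delicate bookkeeping handled in Hyt\"onen--Kairema, and in the write-up I would simply quote their theorem after verifying the doubling hypothesis on $(\mathbf{b}\Omega,d)$.
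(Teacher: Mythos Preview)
Your approach is exactly what the paper does: it verifies just before the lemma that $(\mathbf b\Omega,d)$ is geometrically doubling and then states Lemma \ref{lem3.5} (together with Lemma \ref{lem3.6}) as a direct consequence of the Hyt\"onen--Kairema construction in \cite{HK}, without reproducing any of the proof. Your sketch of the underlying argument is reasonable, but since the paper simply cites \cite{HK}, your final plan to quote their theorem after checking the doubling hypothesis matches the paper's treatment precisely.
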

The points $p^{(k)}_j$ above are dyadic points in $\mathbf b\Omega$.

\begin{lem}\label{lem3.6}
	Let $\delta$ and $\{p^{(k)}_j\}$ be as in Lemma \ref{lem3.5}. There are finitely many collections $\{\mathcal Q_l\}_{l=1}^{N}$ such that the following hold:\begin{enumerate}[label=\textnormal{(\arabic*)}]\item Each collection $\mathcal Q_l$ is associated to some dyadic points $\{z^{(k)}_j\}$ and satisfies all the properties in Lemma \ref{lem3.5}.\item For any $z\in \mathbf b\Omega$ and small $r>0$, there exist $Q_{j_1}^{k_1}\in \mathcal Q_{l_1}$ and $Q_{j_2}^{k_2}\in \mathcal Q_{l_2}$ such that
		\[Q_{j_1}^{k_1}\subseteq B(z,r)\subseteq Q_{j_2}^{k_2}\;\;\;\text{ and }\;\;\;\mu(B(z,r))\approx\mu(Q_{j_1}^{k_1})\approx\mu(Q_{j_2}^{k_2}).\]\end{enumerate}
\end{lem}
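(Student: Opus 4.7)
The plan is to apply the adjacent dyadic systems construction of Hyt\"onen--Kairema~\cite{HK} to the doubling quasi-metric space $(\mathbf{b}\Omega, d, \mu)$. The compactness of $\mathbf{b}\Omega$ together with the estimates (\ref{2.14})--(\ref{2.15}) already ensure that $d$ is a doubling quasi-metric and $\mu$ a doubling measure, which is precisely what their theorem requires.

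The first step is to construct the $N$ families. Each $\mathcal{Q}_l$ is obtained by running the maximal packing argument of Lemma \ref{lem3.5} starting from a different configuration of reference points. Fixing a small parameter $\eta \in (0,1)$, one produces $N$ systems whose reference points at scale $k$ are shifted by amounts of order $\eta s^{-k}\delta$ relative to one another, arranged so that at every scale the union of all shifted reference points is $\eta s^{-k}\delta$-dense in $\mathbf{b}\Omega$. The doubling property of $d$ bounds the number $N$ of such systems needed, and each individual system $\mathcal Q_l$ inherits properties (1)--(5) of Lemma \ref{lem3.5} from the construction.

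To verify property (2), given $z \in \mathbf{b}\Omega$ and small $r > 0$, choose integers $k_1 > k_2$ so that $s^{-k_2}\delta$ is just above $r$ and $s^{-k_1}\delta$ just below, with a margin that beats the quasi-triangle constant of $d$ and the constants $c, C$ from Lemma \ref{lem3.5}(4). By density of the shifted reference points, at least one family $\mathcal{Q}_{l_2}$ contains a reference point $z^{(k_2)}_{j_2}$ within distance $\eta s^{-k_2}\delta$ of $z$; tuning $\eta$ appropriately, one gets $B(z,r) \subseteq B(z^{(k_2)}_{j_2}, c s^{-k_2}\delta) \subseteq Q^{k_2}_{j_2} \subseteq B(z^{(k_2)}_{j_2}, C s^{-k_2}\delta)$. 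The doubling of $\mu$ then delivers $\mu(Q^{k_2}_{j_2}) \approx \mu(B(z,r))$. A symmetric argument at scale $k_1$ selects an interior cube $Q^{k_1}_{j_1} \subseteq B(z,r)$ of comparable measure.

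The hard part is the combinatorial accounting in the adjacent-systems construction: one must choose the shifts carefully enough that $N$ systems genuinely suffice, with $N$ depending only on the doubling constants of $d$ and $\mu$ and not on $z$ or $r$. Once the tuning of $\eta$, $s$, and the shift configuration is fixed, the inclusions and measure comparabilities asserted in (2) are immediate consequences of Lemma \ref{lem3.5} and the doubling inequalities (\ref{2.14})--(\ref{2.15}).
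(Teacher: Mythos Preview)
Your proposal is correct and follows exactly the route taken in the paper: the paper does not prove Lemma~\ref{lem3.6} directly but simply observes that $(\mathbf b\Omega,d,\mu)$ is a doubling quasi-metric measure space and invokes the adjacent dyadic systems theorem of Hyt\"onen--Kairema~\cite{HK}. Your sketch just unpacks a bit of what that theorem says.
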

Letting the sets $Q_{j}^{k}$ in Lemma 2.3 serve as these bases, we construct dyadic tents in $\Omega$ as follows:
\begin{de}\label{de3.7}
	Let $\delta$, $\{p^{(k)}_j\}$ and $\mathcal Q=\{Q_j^{k}\}$ be as in Lemma \ref{lem3.5}. We define the collection $ {\mathcal T}=\{\hat {K}_j^{k}\}$ of dyadic tents in the domain $\Omega$ as 
	\[\hat {K}_j^{k}:=\{z\in\Omega: \pi(z)\in Q_j^k \text{ and }\operatorname{dist}(\pi(z),z)<s^{-k}\delta\}.\]
\end{de}
\begin{lem}\label{lem3.8}Let $\mathcal T=\{\hat K^k_j\}$ be a collection of dyadic tents in Definition \ref{de3.7} and let $\{\mathcal Q_l\}_{l=1}^{N}$ be collections  in Lemma \ref{lem3.6}.  The following statements hold true:\begin{enumerate}[label=\textnormal{(\arabic*)}]\item For any $\hat {K}_j^{k}$, $\hat {K}_i^{k+1}$ in $\mathcal T$, either $\hat {K}_j^{k}\supseteq\hat {K}_i^{k+1}$ or $\hat {K}_j^{k}\bigcap\hat {K}_i^{k+1}=\emptyset$.\item For any $z\in \mathbf b\Omega$ and small $r>0$, there exist $Q_{j_1}^{k_1}\in \mathcal Q_{l_1}$ and $Q_{j_2}^{k_2}\in \mathcal Q_{l_2}$ such that
		\[\hat K_{j_1}^{k_1}\subseteq B^\#(z,r)\subseteq \hat K_{j_2}^{k_2}\;\;\;\text{ and }\;\;\;|B^\#(z,r)|\approx |\hat K_{j_1}^{k_1}|\approx |\hat K_{j_2}^{k_2}|.\]\end{enumerate}\end{lem}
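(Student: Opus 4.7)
The plan is to lift the nesting and covering properties of the dyadic system $\mathcal Q$ on $\mathbf b\Omega$ (Lemmas \ref{lem3.5} and \ref{lem3.6}) to the dyadic tents in $\Omega$. The key observation I would exploit is that a tent $\hat K^k_j$ is built from a base $Q^k_j\subseteq \mathbf b\Omega$ together with a height cutoff $s^{-k}\delta$ that shrinks geometrically in $k$, so both required properties reduce to the corresponding properties of $\mathcal Q$ combined with an elementary comparison of heights.

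For part (1), I would suppose that $\hat K^k_j\cap \hat K^{k+1}_i\neq \emptyset$ and pick a point $w$ in the intersection. Then $\pi(w)\in Q^k_j\cap Q^{k+1}_i$, so Lemma \ref{lem3.5}(3) forces $Q^k_j\supseteq Q^{k+1}_i$. To conclude $\hat K^k_j\supseteq \hat K^{k+1}_i$, I would take an arbitrary $w'\in \hat K^{k+1}_i$ and observe that $\pi(w')\in Q^{k+1}_i\subseteq Q^k_j$ and, since $s>1$, $\operatorname{dist}(\pi(w'),w')<s^{-(k+1)}\delta<s^{-k}\delta$, so $w'\in \hat K^k_j$.

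For part (2), my strategy is to apply Lemma \ref{lem3.6}(2) to the boundary ball $B(z,r)$, producing $Q^{k_1}_{j_1}\in \mathcal Q_{l_1}$ and $Q^{k_2}_{j_2}\in \mathcal Q_{l_2}$ with $Q^{k_1}_{j_1}\subseteq B(z,r)\subseteq Q^{k_2}_{j_2}$ and $\mu(Q^{k_1}_{j_1})\approx\mu(B(z,r))\approx\mu(Q^{k_2}_{j_2})$. By Lemma \ref{lem3.5}(4) each generation $k$ is tied to the scale $s^{-k}\delta$, and after possibly shifting $k_1$ and $k_2$ by a bounded number of generations (which only changes boundary measures by a fixed factor), I would arrange
\[
s^{-k_1}\delta\le r\le s^{-k_2}\delta\qquad\text{and}\qquad s^{-k_1}\delta\approx r\approx s^{-k_2}\delta.
\]
With these choices, the containments $\hat K^{k_1}_{j_1}\subseteq B^\#(z,r)\subseteq \hat K^{k_2}_{j_2}$ follow directly: for $w\in \hat K^{k_1}_{j_1}$ one has $\pi(w)\in Q^{k_1}_{j_1}\subseteq B(z,r)$ and $\operatorname{dist}(\pi(w),w)<s^{-k_1}\delta\le r$, and the opposite containment is analogous. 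The volume comparisons then drop out of (\ref{3.1}) and (\ref{2.15}), since each of $|B^\#(z,r)|,|\hat K^{k_1}_{j_1}|,|\hat K^{k_2}_{j_2}|$ is comparable to the square of the corresponding height parameter times the boundary measure of the base, and all three heights are $\approx r$ while all three bases have $\mu$-measure $\approx \mu(B(z,r))$.

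The step I expect to be the main obstacle is the alignment of the height parameter $s^{-k}\delta$ with the radius $r$, because Lemma \ref{lem3.6} is phrased purely in terms of boundary surface measure and says nothing about the normal direction. Making this alignment rigorous requires invoking the monotonicity and doubling of $\delta\mapsto \mu(B(q,\delta))$ that are built into (\ref{2.10}) and (\ref{2.15}) and checking that shifting $k_1,k_2$ by $O(1)$ generations preserves the hypotheses of Lemma \ref{lem3.6}(2); once this bookkeeping is in place, both the set containments and the volume estimates are immediate from the definitions.
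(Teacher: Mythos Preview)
The paper states Lemma \ref{lem3.8} without proof, treating it as an immediate consequence of Definition \ref{de3.7} together with Lemmas \ref{lem3.5} and \ref{lem3.6}. Your argument is correct and supplies exactly the details one would expect: part (1) is immediate from the nesting in Lemma \ref{lem3.5}(3) and the height inequality $s^{-(k+1)}\delta<s^{-k}\delta$, while part (2) is obtained by lifting the boundary containments of Lemma \ref{lem3.6}(2) to tents. The height-alignment issue you single out is genuine but minor: from $Q^{k_1}_{j_1}\subseteq B(z,r)\subseteq Q^{k_2}_{j_2}$ and Lemma \ref{lem3.5}(4) one already has $s^{-k_1}\delta\lesssim r\lesssim s^{-k_2}\delta$, and the Hyt\"onen--Kairema construction underlying Lemma \ref{lem3.6} in fact chooses the generations so that $s^{-k_1}\delta\approx r\approx s^{-k_2}\delta$; alternatively, your proposed shift by $O(1)$ generations works and preserves both the containments and the measure comparisons via doubling.
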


 Set $\mathscr{B}:=\{B^\#(z,r)\}_{z\in \mathbf b\Omega,r\leq \epsilon_0}\bigcup\{\Omega\}$. Recall the matrix $\mathcal B_2$ constant given by
\[\mathcal B_2(W):=\sup_{B\in \mathscr{B}}\bignm\langle W\rangle_{B}^{1/2}\langle W^{-1}\rangle_{B}^{1/2}\bignm^2,\]
where $\nm\cdot\nm$ denotes the norm of the matrix acting on $\mathbb C^d$ and $\langle W\rangle_B:=\frac{\int_BWdV}{\int_BdV}$. Let $\{\mathcal T_l\}_{l=1}^N$ be collections of  dyadic tents induced by collections $\{\mathcal Q_l\}_{l=1}^N$ in Lemma \ref{lem3.6} respectively. Set $\mathscr{T}:=\{\hat K^k_j:\hat K^k_j\in \mathcal T_l \text{ for  some }l=1,\ldots,N\}\bigcup\{\Omega\}$. Then Lemma \ref{lem3.8} implies that 
\[\mathcal B_2(W)\approx\sup_{\hat K^k_j\in \mathscr{T}}\bignm\langle W\rangle_{\hat K^k_j}^{1/2}\langle W^{-1}\rangle_{\hat K^k_j}^{1/2}\bignm^2.\]
From now on, we will abuse the notation of $\mathcal B_2(W)$  to represent both the supremum in $\mathscr{B}$ and $\mathscr{T}$.
\subsection{Dyadic kubes on $\Omega$} \begin{de}\label{de3.9}For a collection $\mathcal T$ of dyadic tents, we define the center $\alpha_j^{(k)}$ of each tent $\hat {K}_j^{k}$ to be the point satisfying 
	\begin{itemize}
		\item $\pi(\alpha_j^{(k)})=p^{(k)}_j$; and 
		\item $\operatorname{dist} (p^{(k)}_j,\alpha_j^{(k)})=\frac{1}{2}\sup_{\pi(p)=p^{(k)}_j}\operatorname{dist}(p,\mathbf b\Omega)$.
	\end{itemize}
	We set $K^{-1}=\Omega\backslash\left(\bigcup_{j}\hat  {K}_j^{0}\right)$, and for each point $\alpha_j^{(k)}$ or its corresponding tent $\hat K^k_j$, we define the ``kube''  
	${K}_j^{k}:=\hat {K}_j^{k}\backslash\left(\bigcup_{l}\hat  {K}_l^{k+1}\right),$
	where $l$ is any index with $p^{(k+1)}_l\in \hat{K}^{k}_j$. \end{de}
By the definition of dyadic kubes, the following lemma for dyadic kubes holds true:\begin{lem}\label{3.10} Let $\mathcal T=\{\hat K^k_j\}$ be the system of tents induced by  $\mathcal Q$ in Definition \ref{de3.7}. Let $K^k_j$ be the kubes of $\hat K^k_j$. Then \begin{enumerate}[label=\textnormal{(\arabic*)}]\item $\pi(K^k_j)=Q^k_j\in \mathcal Q$;\item $K^k_j$'s are pairwise disjoint and\; $\bigcup_{j,k}K^k_j=\Omega$; \item the following volume estimates hold: \begin{align}\label{3.6}|K^k_j|\approx |\hat K^k_j|\approx s^{-2k}\delta^{-2}\prod_{j=2}^{n}\tau_j^2(p^{(k)}_j,s^{-k}\delta).\end{align}\end{enumerate}\end{lem}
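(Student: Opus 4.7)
The plan is to establish the three parts of Lemma~\ref{3.10} by unpacking Definition~\ref{de3.9} against the combinatorial properties of $\mathcal Q$ (Lemma~\ref{lem3.5}) and the tent volume formula~(\ref{3.1}); the only non-bookkeeping step is the lower bound $|K^k_j|\gtrsim|\hat K^k_j|$ in part~(3).

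For part~(1), the inclusion $\pi(K^k_j)\subseteq Q^k_j$ is immediate from $K^k_j\subseteq\hat K^k_j$ and Definition~\ref{de3.7}. For the reverse, given $q\in Q^k_j$, I would pick the normal point $z=q-r n(q)$ with $s^{-(k+1)}\delta\le r<s^{-k}\delta$. Then $\pi(z)=q\in Q^k_j$ and $\operatorname{dist}(\pi(z),z)=r<s^{-k}\delta$ put $z\in\hat K^k_j$, while any child $\hat K^{k+1}_l$ requires normal distance strictly less than $s^{-(k+1)}\delta\le r$, so $z$ lies in no child of $\hat K^k_j$ and hence $z\in K^k_j$ with $\pi(z)=q$.

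Part~(2) is combinatorial. Same-level disjointness follows from the disjointness of the bases $Q^k_j$ in Lemma~\ref{lem3.5}(2), since $K^k_j\subseteq\hat K^k_j$ and $\pi(\hat K^k_j)=Q^k_j$. For different levels $k<k'$, any intersection $K^k_j\cap K^{k'}_{j'}\ne\emptyset$ would force $Q^{k'}_{j'}\subseteq Q^k_j$ by Lemma~\ref{lem3.5}(3); chasing the chain of parents produces an index $l$ with $p^{(k+1)}_l\in Q^k_j$ and $\hat K^{k'}_{j'}\subseteq\hat K^{k+1}_l$, but this $\hat K^{k+1}_l$ was removed in forming $K^k_j$, a contradiction. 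For coverage, take any $z\in\Omega\setminus K^{-1}$: the tent condition $\operatorname{dist}(\pi(z),z)<s^{-k}\delta$ forces $s^k<\delta/\operatorname{dist}(z,\mathbf b\Omega)$, so only finitely many levels are permitted; picking the maximal level $k^\star\ge 0$ with $z\in\hat K^{k^\star}_{j^\star}$ (for the unique $j^\star$ with $\pi(z)\in Q^{k^\star}_{j^\star}$) gives $z\in K^{k^\star}_{j^\star}$ by maximality.

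For part~(3), the estimate for $|\hat K^k_j|$ follows by identifying $\hat K^k_j$ with $B^\#(p^{(k)}_j,s^{-k}\delta)$ up to multiplicative constants (Lemma~\ref{lem3.8} together with Lemma~\ref{lem3.5}(4)) and invoking~(\ref{3.1}). The comparison $|K^k_j|\approx|\hat K^k_j|$ is the main obstacle: each child $\hat K^{k+1}_l$ with $p^{(k+1)}_l\in Q^k_j$ should satisfy $|\hat K^{k+1}_l|\lesssim s^{-2}|\hat K^k_j|$, since the $\tau$-comparison $\tau_j(p^{(k+1)}_l,s^{-(k+1)}\delta)\lesssim \tau_j(p^{(k)}_j,s^{-k}\delta)$ (a consequence of (\ref{2.10})--(\ref{2.12}) and the containment $B(p^{(k+1)}_l,cs^{-(k+1)}\delta)\subseteq B(p^{(k)}_j,Cs^{-k}\delta)$ from Lemma~\ref{lem3.5}(4)) absorbs the change in $\prod\tau_j^2$ and leaves the honest $s^{-2}$ prefactor. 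Summing over the at most $N$ children (Lemma~\ref{lem3.5}(5)) yields $\bigl|\bigcup_l\hat K^{k+1}_l\bigr|\lesssim Ns^{-2}|\hat K^k_j|$, and choosing the free parameter $s$ in Lemma~\ref{lem3.5} large enough once and for all makes this at most $\tfrac{1}{2}|\hat K^k_j|$, giving $|K^k_j|\ge\tfrac{1}{2}|\hat K^k_j|$. The delicate point is the uniformity of the $\tau$-comparison constant in $k$, $j$, and $l$, so that a single choice of $s$ works globally; this is where the compactness of $\mathbf b\Omega$ and McNeal's uniform construction of the coordinates enter.
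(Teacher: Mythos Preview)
The paper states this lemma without proof, regarding it as immediate from the definitions. Your arguments for parts~(1) and~(2) are correct and fill in the routine details. Part~(3), however, contains a genuine gap.

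Your strategy for the lower bound $|K^k_j|\gtrsim|\hat K^k_j|$ is to bound each child by $|\hat K^{k+1}_l|\lesssim s^{-2}|\hat K^k_j|$, sum over the at most $N$ children from Lemma~\ref{lem3.5}(5), and then choose the parameter $s$ large so that $Ns^{-2}\le\tfrac12$. The problem is that $N$ itself depends on $s$: it is essentially the number of $s^{-(k+1)}\delta$-separated points that fit in a ball of $d$-radius comparable to $s^{-k}\delta$. For a strictly pseudoconvex domain in $\mathbb C^n$ one has $\mu(B(q,r))\approx r^{n-1}$, hence $N\approx s^{n-1}$ and $Ns^{-2}\approx s^{n-3}$; for $n\ge3$ your argument therefore cannot close no matter how large $s$ is taken. (Lemma~\ref{lem3.5}(5) only asserts that $N$ is independent of $k$ and $j$, not of $s$.)

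The fix is already implicit in your proof of part~(1). There you observed that every point $z$ with $\pi(z)\in Q^k_j$ and $s^{-(k+1)}\delta\le\operatorname{dist}(\pi(z),z)<s^{-k}\delta$ lies in $K^k_j$; since the children $\{Q^{k+1}_l\}$ partition $Q^k_j$, the kube $K^k_j$ is \emph{exactly} this top slab. Using the diffeomorphism $(q,r)\mapsto q-rn(q)$ of Lemma~\ref{lem3.2}, whose Jacobian is uniformly bounded above and below on $N_{\epsilon_0}(\mathbf b\Omega)$, one reads off
\[
|K^k_j|\approx\bigl(s^{-k}\delta-s^{-(k+1)}\delta\bigr)\,\mu(Q^k_j)=(1-s^{-1})\,s^{-k}\delta\,\mu(Q^k_j)\approx(1-s^{-1})\,|\hat K^k_j|,
\]
which gives the desired comparison for any fixed $s>1$ without tuning.
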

The next lemma is crucial in the proof of  Lemma \ref{lem4.4} which relates the norm of a holomorphic vector-valued function on $L^2(\tilde{\mathcal  W})$.
\begin{lem}\label{lem2.9}
	There exists finitely many collections of $\{\mathcal T_l\}_{l=1}^M$ such that for any $z\in \Omega$, we can find a tent $\hat K^k_j\in \mathcal T_l$ for some $l$ such that the following holds:
	\begin{enumerate}[label=\textnormal{(\arabic*)}]
		\item $z\in K^k_j$,
		\item If $k\geq 0$,  then $K^k_j\supseteq D(z, cs^{-k})$ for some constant $c>0$. If $k=-1$, then $K^{-1}$  contains a Euclidean ball centered at $z$ with radius $r\approx 1$.
	\end{enumerate}
\end{lem}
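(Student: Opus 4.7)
The plan is to build the collections $\{\mathcal T_l\}_{l=1}^{M}$ by combining two kinds of shifts of the base construction from Lemma \ref{lem3.5}. The tangential shifts are exactly the adjacent dyadic systems $\{\mathcal Q_l\}_{l=1}^N$ supplied by Lemma \ref{lem3.6}; they will guarantee that the projection $p:=\pi(z)$ sits deep inside some boundary cube. For the normal direction, I would fix a (large) positive integer $M'$ and repeat the construction of Lemma \ref{lem3.5} with the shifted base parameters $\delta_i:=s^{i/M'}\delta$ for $i=0,1,\dots,M'-1$. Using Definition \ref{de3.7}, this yields $M=NM'$ collections of dyadic tents. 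The normal shifts will ensure that the depth $\delta_z:=\operatorname{dist}(z,\mathbf b\Omega)$ lies well inside the depth interval of the corresponding kube.

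Given $z\in\Omega$, the case $\delta_z\gtrsim 1$ is immediate: in any of the collections $z\in K^{-1}$, and since $z$ is bounded away from $\mathbf b\Omega$ and from $\bigcup_j\hat K^0_j$, a Euclidean ball of radius $r\approx 1$ around $z$ fits inside $K^{-1}$. For $\delta_z$ small, set $p:=\pi(z)$ and first select $i\in\{0,\dots,M'-1\}$ and an integer $k\geq 0$ so that $\delta_z$ lies in the middle third of the interval $[s^{-k-1}\delta_i,\,s^{-k}\delta_i)$; this is possible because, as $i$ varies, these intervals tile the range of possible depths with controlled overlap. With $k$ and $\delta_i$ fixed, apply Lemma \ref{lem3.6}(2) to $p$ at the scale $r:=c_0 s^{-k}\delta_i$, for a small absolute constant $c_0$, to obtain an adjacent system $\mathcal Q_{l_0}$ and a cube $Q^k_j\in\mathcal Q_{l_0}$ with $B(p,c_0 s^{-k}\delta_i)\subseteq Q^k_j$ and $\mu(Q^k_j)\approx\mu(B(p,c_0 s^{-k}\delta_i))$. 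Thus $p$ is comfortably inside $Q^k_j$, and I take the kube $K^k_j$ from the corresponding tent collection $\mathcal T_l$ built with parameter $\delta_i$.

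Conclusion (1), $z\in K^k_j$, then follows from our choice of $k$: $p\in Q^k_j$ and $\delta_z\in[s^{-k-1}\delta_i,\,s^{-k}\delta_i)$ force $z\in\hat K^k_j$ while excluding every child tent. For conclusion (2), one must verify that every $w\in D(z,c s^{-k})$ satisfies $\pi(w)\in Q^k_j$ and $s^{-k-1}\delta_i\le\operatorname{dist}(\pi(w),w)<s^{-k}\delta_i$, provided $c$ is chosen smaller than $c_0$ and smaller than a fixed fraction of $(1-s^{-1})\delta$. The depth condition is immediate from the normal polyradius $\tau_1(z,cs^{-k})=cs^{-k}$ combined with the middle-third position of $\delta_z$. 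I expect the chief technical obstacle to be the tangential/projection condition: one needs to show that $\pi$ moves points in $D(z,cs^{-k})$ by at most the tangential polyradii $\tau_j(p,c_0 s^{-k}\delta_i)$ of the enclosing ball $B(p,c_0 s^{-k}\delta_i)\subseteq Q^k_j$, uniformly in $z$ and $k$. This reduces, via the near-invariance of the coordinate frame under $O(\delta_z)$-perturbations of the base point and the covering estimates (\ref{2.11})--(\ref{2.12}), to the inequality $\tau_j(z,cs^{-k})\lesssim \tau_j(p,s^{-k}\delta_i)$ in the tangential directions, which holds for $c$ sufficiently small by the McNeal comparison estimates for $\tau_j$. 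Once this bound is in hand, the containment $D(z,cs^{-k})\subseteq K^k_j$ follows and the proof is complete.
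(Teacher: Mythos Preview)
Your strategy is essentially the paper's: couple the $N$ adjacent boundary systems from Lemma~\ref{lem3.6} (so that $\pi(z)$ sits deep in some level-$k$ cube) with finitely many normal-direction shifts (so that $\operatorname{dist}(z,\mathbf b\Omega)$ lies well inside the depth interval of the corresponding kube), and then check that $\pi$ carries $D(z,cs^{-k})$ into the boundary ball already secured. The paper implements the normal shifts a bit more economically---it keeps the boundary grids $\mathcal Q_l$ fixed and only alters the tent heights to $\frac{s+2}{3}\delta s^{-k}$ and $\frac{2s+1}{3}\delta s^{-k-1}$, obtaining $M=3N$---whereas you rescale the base parameter $\delta\mapsto s^{i/M'}\delta$, which also works but forces you to rerun Hyt\"onen--Kairema for each $\delta_i$. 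One small imprecision to fix: Lemma~\ref{lem3.6}(2) does not hand you a cube at generation exactly $k$; for small $c_0$ it gives $Q^{k_2}_{j_2}$ with $k_2\ge k$, and you should pass to its level-$k$ ancestor in the same system (which still contains the ball and whose kube contains $z$ by your depth choice). The projection step you flag as the chief obstacle is precisely what the paper proves in detail using the quasi-metric and the covering properties (\ref{2.11})--(\ref{2.12}); your reduction to the comparison $\tau_j(z,cs^{-k})\lesssim\tau_j(p,s^{-k}\delta_i)$ is the right heuristic for it.
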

\begin{proof}
Let $\{\mathcal T_l\}_{l=1}^N$ be collections of dyadic tents induced by collections $\{\mathcal Q_l\}_{l=1}^N$ in Lemma \ref{lem3.6} respectively with parameter $\delta$ and reference points $p^{(k)}_j$. Now let  $\{\mathcal T^\prime_l\}_{l=1}^N$ and $\{\mathcal T^{\prime\prime}_l\}_{l=1}^N$ be collections also induced by $\{Q_l\}_{l=1}^N$ in Lemma \ref{lem3.6}, but each $\hat K^{\prime k}_j\in \mathcal T^\prime_l$ and $\hat K^{\prime \prime k}_j\in \mathcal T^\prime_l$ are defined by $Q^k_j\in \mathcal Q_l$ as follows:
	\begin{align}&\hat K^{\prime k}_j:=\left\{z\in\Omega: \pi(z)\in Q_j^k \text{ and }\operatorname{dist}(\pi(z),z)<\frac{s+2}{3}\delta s^{-k}\right\},\nonumber\\&\hat K^{\prime\prime k}_j:=\left\{z\in\Omega: \pi(z)\in Q_j^k \text{ and }\operatorname{dist}(\pi(z),z)<\frac{2s+1}{3}\delta s^{-k-1}\right\},\nonumber\end{align}
i.e. $\hat K^{\prime k}_j\in \mathcal T^\prime_l$ and $\hat K^{\prime\prime k}_j\in \mathcal T^{\prime\prime}_l$ have the same ``base'' $Q^k_j$ as the tent $\hat K^{k}_j$ in $\mathcal T_l$ but  different ``height''. Thus $\mathcal T^\prime_l$ and $\mathcal T^{\prime\prime}_l$ still satisfies Lemma \ref{lem3.8}. We call the tents $\hat K^{\prime k}_j$ and $\hat K^{\prime\prime k}_j$  {\it{cousins}} of $\hat K^{k}_j$ and use $K^{\prime k}_j$ and  $K^{\prime\prime k}_j$ to denote the dyadic kubes induced by $\hat K^{\prime k}_j$ and $\hat K^{\prime\prime k}_j$  respectively as in Definition \ref{de3.9}. We claim $\{\mathcal T_l\}_{l=1}^N\bigcup \{\mathcal T^\prime_l\}_{l=1}^N\bigcup \{\mathcal T^{\prime\prime}_l\}_{l=1}^N$ are the desired collections in Lemma \ref{lem2.9}. Hence, $M=3N$.

When the point $z\in \Omega$ is far away from the boundary $\mathbf b \Omega$, say $z\in K^{\prime-1}$. Then $\operatorname{dist}(z,\pi(z))\geq \frac{2s+1}{3}\delta$. Therefore, $\text{dist}(z,\mathbf b K^{-1})\geq \frac{2s-2}{3}\delta$  and $K^{-1}$ contains a ball centered at $z$ with radius $\frac{2s-2}{3}\delta\approx 1$. 

When the point $z\in \Omega\backslash K^{\prime-1}$.  Lemma \ref{lem3.8} implies that there exists a tent $\hat K^k_j\in \mathcal T_l$ with $z\in K^k_j$ and a constant $c_1>0$ such that $\pi(\hat K^k_j)\supseteq B(\pi(z),c_1s^{-k})$. We first show that there exists a constant $c_2$ such that the polydisc $D(z,c_2s^{-k})$ satisfies $\pi(D(z,c_2s^{-k}))\subseteq B(\pi(z),c_1s^{-k})$.  Suppose $\zeta\in D(z,c_2s^{-k})$. Let $\zeta^\prime$ be the projection of $\zeta$ on the hypersurface  $\{w:\rho(w)=\rho(z)\}$. 
Then we have $d(\zeta,z)\lesssim c_2s^{-k}$ and 
\[d(\zeta^\prime,z)\lesssim d(\zeta^\prime,\zeta)+d(z,\zeta)\lesssim c_2s^{-k}.\]
Since the parameter $\delta$ in Lemma \ref{lem3.5} is chosen to be sufficiently small and the gradient $\nabla \rho$ is continuous near $\mathbf b\Omega$, we may assume that $|\nabla\rho(x)-\nabla \rho(y)|<\epsilon$ for all  $y\in \mathbf b\Omega$ and $x\in B(y,\delta)$.
Then, from $\pi(\zeta)=\zeta^\prime -\rho(z)\nabla\rho(\zeta)$ and $\pi(z)=z -\rho(z)(\nabla\rho(z))$ , we obtain
\begin{align*}
d(\pi(\zeta),\pi(z))&\lesssim d(\pi(\zeta),\zeta^\prime-\rho (z)\nabla \rho(z))+d(\pi(z),\zeta^\prime-\rho (z)\nabla \rho(z))\nonumber\\&\lesssim -\epsilon \rho(z)+d(\pi(z),\zeta^\prime-\rho (z)\nabla \rho(z)).
\end{align*} 
By (\ref{2.11}) and (\ref{2.12}), there exists a constant $C>0$ such that $D(z,\delta s^{-k})\subseteq CD(\pi(z), \delta s^{-k})$,
Thus, $D(z,\delta s^{-k})-\rho (z)\nabla \rho(z)\subseteq CD(\pi(z), \delta s^{-k})$. Since the sets $D(z,\delta s^{-k})-\rho (z)\nabla \rho(z)$ and $CD(\pi(z), \delta s^{-k})$ have the common center $\pi(z)$, the containment still holds after applying a dilation to both sets. Therefore, there exists a constant $C_2$ such that
\[C^{-1}C_2^{-1}D(z,\delta s^{-k})-\rho (z)\nabla \rho(z)\subseteq C^{-1}D(\pi(z), \delta s^{-k})\subseteq D(\pi(z),c_1s^{-k}).\]
We may choose $c_2$ to be small so that 
\[D(z,c_2s^{-k})\subseteq C^{-1}C_2^{-1}D(z,\delta s^{-k}).\]
Then we obtain $D(z,c_2s^{-k})-\rho (z)\nabla \rho(z)\subseteq D(\pi(z),c_1s^{-k})$ which also implies that 
\[d(\pi(z),\zeta^\prime-\rho (z)\nabla \rho(z))\lesssim c_1s^{-k}.\]
Hence $d(\pi(\zeta),\pi(z))\lesssim c_1s^{-k}$ and $\pi(D(z,c_2s^{-k}))\subseteq B(\pi(z),c_1s^{-k})$ for some $c_2$.

 Now we are ready to show that $D(z,cs^{-k})$ is in $K^k_j$, $K^{\prime k}_j$, or $K^{\prime\prime k}_j$ for some $c>0$. Since $Q^k_j\supseteq \pi(D(z,c_2s^{-k}))$, it suffices to check the distance from $z$ to the boundary $\mathbf b\Omega$. Note that
 \begin{align*}
& \{-\rho(\zeta):\zeta\in K^k_j\}=\left[\delta s^{-k-1},\delta s^{-k}\right),\nonumber\\
& \{-\rho(\zeta):\zeta\in K^{\prime k}_j\}=\left[ \frac{s+2}{3}\delta s^{-k-1},\frac{s+2}{3}\delta s^{-k}\right),\nonumber\\
& \{-\rho(\zeta):\zeta\in K^{\prime \prime k}_j\}=\left[ \frac{2s+1}{3}\delta s^{-k-2},\frac{2s+1}{3}\delta s^{-k-1}\right).\nonumber
 \end{align*}
If $z\in K^k_j\backslash K^{\prime\prime k}_j$, then the distance from $z$ to the boundary of  $K^{\prime k}_j$ along the normal direction $\pm \nabla \rho(z)$ is greater than $\frac{s-1}{3}\delta s^{-k-1}$. If $z\in K^k_j\backslash K^{\prime k}_j$, the distance from $z$ to the boundary of  $K^{\prime\prime k}_j$ along the normal direction $\pm \nabla \rho(z)$ is greater than $\frac{s-1}{3}\delta s^{-k-2}$. Otherwise, $z\in K^{\prime\prime k}_j\bigcap K^{\prime k}_j$ and the distance from $z$ to the boundary of  $K^{ k}_j$ along the normal direction $\pm \nabla \rho(z)$ is greater than $\frac{s-1}{3}\delta s^{-k-1}$.  Hence by choosing $c=\min\{c_2, \frac{s-1}{3}\delta s^{-2}\}$, we have $D(z,cs^{-k})$ is contained in $K^{ k}_j$, $K^{\prime  k}_j$, or $K^{\prime \prime k}_j$.
\end{proof}
\subsection{Estimates for the Bergman kernel} When $\Omega$ is a simple domain in $\mathbb C^n$, estimates of the Bergman kernel function on $\Omega$ were obtained in \cite{McNeal2,McNeal91,McNeal2003}. 
\begin{thm}\label{thm4.4}
Let $\Omega$ be a simple domain in $\mathbb C^n$. Let $p$ be a boundary point of $\Omega$. There exists a neighborhood $U$ of $p$ so that for all $q_1,q_2\in U\cap \Omega$,
\begin{align}
|K_\Omega(q_1;\bar q_2)|\lesssim t^{-2}\prod_{j=2}^{n}\tau_j(q_1,t)^{-2},
\end{align}
where $t=|\rho(q_1)|+|\rho(q_2)|+\inf\{\epsilon>0:q_2\in D(q_1,\epsilon)\}$.
\end{thm}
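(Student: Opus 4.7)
The plan is to invoke (and, in outline, reproduce) McNeal's pointwise estimates for the Bergman kernel established in \cite{McNeal2,McNeal91,McNeal2003}; the theorem as stated is essentially a compilation of those results. Because a simple domain $\Omega$ belongs to one of four geometric classes, the argument proceeds case by case, but the overall scheme is uniform. I would fix a boundary point $p$, a neighborhood $U$ of $p$, and, for each $q\in U\cap\Omega$, work in the McNeal special coordinate system together with the polydisc $D(q,\delta)$ and the quantities $\tau_j(q,\delta)$ whose basic properties are recorded in (\ref{2.10})--(\ref{2.12}). Since $|D(q,\delta)|\approx\delta^{2}\prod_{j=2}^{n}\tau_j(q,\delta)^{2}$, the target inequality is precisely $|K_\Omega(q_1;\bar q_2)|\lesssim |D(q_1,t)|^{-1}$.

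The first step is the on-diagonal estimate $K_\Omega(q;\bar q)\approx |D(q,|\rho(q)|)|^{-1}$. The upper bound follows from the extremal characterization $K_\Omega(q;\bar q)=\sup\{|f(q)|^2:f\in A^2(\Omega),\,\|f\|_{L^2(\Omega)}\leq 1\}$ together with the sub-mean-value inequality for holomorphic functions on a polydisc $D(q,c|\rho(q)|)\subseteq\Omega$ for $c$ sufficiently small. The lower bound requires exhibiting a unit-norm holomorphic function that peaks at $q$; this is produced by Hörmander's $L^2$ $\bar\partial$-method with a plurisubharmonic weight whose complex Hessian at $q$ is comparable to $\sum_{j}\tau_j(q,|\rho(q)|)^{-2}\,dz_j\otimes d\bar z_j$. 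The construction of such a weight is where the four classes diverge and is the technical core of McNeal's papers.

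Next, I would upgrade to the off-diagonal bound. When $t\approx |\rho(q_1)|+|\rho(q_2)|$, the polydiscs $D(q_1,t)$ and $D(q_2,t)$ are comparable by (\ref{2.11}), so the standard inequality $|K_\Omega(q_1;\bar q_2)|^2\leq K_\Omega(q_1;\bar q_1)\,K_\Omega(q_2;\bar q_2)$ combined with the on-diagonal estimate immediately yields the claim. The substantive regime is $t\approx\inf\{\epsilon:q_2\in D(q_1,\epsilon)\}$ with $q_2$ outside a small multiple of $D(q_1,|\rho(q_1)|)$. Here one uses that $z\mapsto K_\Omega(z;\bar q_2)$ is holomorphic near $q_1$ and iterates the reproducing property (or equivalently applies the Cauchy integral formula on nested polydiscs): each $\tau_j$-adapted derivative gains a factor of $\tau_j(q_1,t)^{-1}$, and the resulting anisotropic Taylor expansion, evaluated by means of the on-diagonal bound at scale $t$, yields the stated decay.

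The principal obstacle is the $\bar\partial$-construction of the peak function: building a plurisubharmonic weight whose complex Hessian accurately reflects the anisotropic scales $\tau_j(q,\delta)$ is the nontrivial ingredient, and it is carried out rather differently in each setting --- via the Fefferman asymptotic expansion in the strictly pseudoconvex case, via Catlin's plurisubharmonic bump functions in the finite type $\mathbb{C}^2$ case, via the Minkowski functional of the (convex) polydisc $D(q,\delta)$ itself in the convex finite type case, and coordinate-by-coordinate from the one-variable theory in the decoupled case. Once the on-diagonal bound is in place, the upgrade to the off-diagonal estimate is comparatively routine.
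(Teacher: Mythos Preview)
The paper does not prove this theorem: it is stated in Section~2.4 as a known result, with the sentence ``estimates of the Bergman kernel function on $\Omega$ were obtained in \cite{McNeal2,McNeal91,McNeal2003}'' immediately preceding it, and no proof is given. Your proposal correctly recognizes this (``the theorem as stated is essentially a compilation of those results''), and your outline of the on-diagonal extremal/$\bar\partial$ argument followed by the off-diagonal upgrade is a faithful sketch of the strategy in McNeal's papers; so there is no discrepancy to flag, only that you have supplied more than the paper itself does.
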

We can reformulate Theorem \ref{thm4.4} as below.
\begin{thm}\label{thm4.5}
Let $\Omega$ be a simple  domain in $\mathbb C^n$. Let $p$ be a boundary point of $\Omega$. There exists a neighborhood $U$ of $p$ so that for all $q_1,q_2\in U\cap \Omega$,
\begin{align}\label{4.6}
|K_\Omega(q_1;\bar q_2)|\lesssim |B^\#(\pi(q_1),t)|^{-1},
\end{align}
where $t=|\rho(q_1)|+|\rho(q_2)|+\inf\{\epsilon>0:q_2\in D(q_1,\epsilon)\}$. Moreover, there exists a constant $c$ such that $q_1, q_2\in B^\#(\pi(q_1),c\delta)$.
\end{thm}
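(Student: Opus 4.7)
The plan is to deduce Theorem \ref{thm4.5} as a geometric reformulation of Theorem \ref{thm4.4}. By the volume formula (\ref{3.1}),
\[|B^\#(\pi(q_1),t)| \approx t^2\prod_{j=2}^n \tau_j^2(\pi(q_1),t),\]
so the reformulated bound (\ref{4.6}) is equivalent to the bound of Theorem \ref{thm4.4} once we verify that $\tau_j(q_1,t)\approx \tau_j(\pi(q_1),t)$ for each $j=2,\ldots,n$. The first step is therefore to establish this comparison between $\tau_j$ evaluated at the interior point $q_1$ and at its boundary projection $\pi(q_1)$.

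For that comparison, I would argue as follows. Since $t\geq |\rho(q_1)|=\operatorname{dist}(q_1,\pi(q_1))$ and $\tau_1(q_1,t)=t$, the projection $\pi(q_1)$ lies in $D(q_1,Ct)$ for some constant $C$ (and, symmetrically, $q_1\in D(\pi(q_1),Ct)$), using that the normal direction at $\pi(q_1)$ is comparable to the $z_1$-direction in the McNeal coordinates. The covering property (\ref{2.11}) combined with the scaling property (\ref{2.12}) then gives that $D(q_1,t)$ and $D(\pi(q_1),t)$ are comparable up to multiplicative constants, yielding $\tau_j(q_1,t)\approx \tau_j(\pi(q_1),t)$. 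Plugging this into Theorem \ref{thm4.4} produces (\ref{4.6}).

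For the ``moreover'' assertion (which I read as $q_1,q_2\in B^\#(\pi(q_1),ct)$), I would choose $c$ large enough that both containments hold. The containment of $q_1$ is immediate: $\operatorname{dist}(q_1,\pi(q_1))=|\rho(q_1)|\leq t$, so $q_1\in B^\#(\pi(q_1),ct)$ for any $c\geq 1$. For $q_2$ we have $\operatorname{dist}(q_2,\pi(q_2))=|\rho(q_2)|\leq t$, so it remains to show $\pi(q_2)\in B(\pi(q_1),ct)$. This follows from the definition of $t$, which forces $q_2\in D(q_1,t)$, combined with the fact that $\pi$ moves a point by at most $|\rho(q_2)|\leq t$ along a direction essentially parallel to the $z_1$-axis; hence $\pi(q_2)\in D(q_1,C't)$, and by (\ref{2.11}) the intersection with $\mathbf b\Omega$ lies in $B(\pi(q_1),C''t)$.

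The main potential obstacle is carefully tracking constants when passing between polydiscs centered at interior points and polydiscs centered at boundary points, and verifying that the normal-direction displacement by $\pi$ stays inside a polydisc of comparable scale. This is precisely the type of geometric argument that was already carried out in the proof of Lemma \ref{lem2.9}, so the same techniques based on the covering properties (\ref{2.11})-(\ref{2.12}) and the smoothness of $\pi$ from Lemma \ref{lem3.2} apply directly and make the computation routine.
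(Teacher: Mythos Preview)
Your proposal is correct and follows the same approach as the paper: the paper's proof is a two-sentence sketch stating that the estimate (\ref{4.6}) follows from (\ref{3.1}) and that the containment follows from a triangle inequality argument using the quasi-metric induced by the polydiscs $D(q,\delta)$. You have simply filled in the details the paper leaves implicit, including the comparison $\tau_j(q_1,t)\approx\tau_j(\pi(q_1),t)$ via (\ref{2.11})--(\ref{2.12}) and the verification that $\pi(q_2)\in B(\pi(q_1),ct)$, and you are also right to read $c\delta$ as $ct$.
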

Here the estimate (\ref{4.6}) follows from (\ref{3.1}). Recall that the polydisc $D(q,\delta)$ induces a global quasi-metric \cite{McNeal3} on $\Omega$. Then a triangle inequality argument using this quasi-metric yields the containment  $q_2\in B^\#(\pi(q_1),c\delta)$.

\section{Convex Body domination}
 \subsection{Convex body average $\llangle f \rrangle_{Q}$} For a 
function $f\in L^1(Q)$ on some set $Q\subseteq \mathbb C^n$ with values in $\mathbb C^d$, we recall the definition of the convex body average $\llangle f \rrangle_{Q}$ in \cite{NPTV}:
\[\llangle f \rrangle_{Q}:=\{\langle\varphi f\rangle_{Q}:\varphi \text{ is any complex valued function on } Q \text{ with }\|\varphi\|_\infty\leq 1\}.\]

By its definition, the set $\llangle f \rrangle_{Q}$ is symmetric, convex, and compact. For a collection of dyadic tents $\{\mathcal T_l\}_{l=1}^N$, we define the dyadic operator $L$ by
\[Lf=\sum_{l=1}^N\sum_{\hat K_j^k\in \mathcal T_l}\llangle f\rrangle_{\hat K_j^k}1_{\hat K_j^k}.\]
This operator takes a vector-valued function and returns a set-valued function.
\begin{lem}[{\hspace{1sp}\cite[Lemma 2.7]{NPTV}}]\label{lem2.13}
Let $f\in L^1(Q,\mathbb C^d)$ and let $g(x)\in \llangle f\rrangle_Q$ a.e. on  $Q$. Then there exists a measurable function $H:Q\times Q\to \mathbb C$ with $\|H\|_\infty\leq |Q|^{-1}$ such that 
\[g(z)=\int_QH(z,w)f(w)dV(w).\]
\end{lem}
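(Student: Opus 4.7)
The plan is to prove the statement first for simple functions $g$ taking values in the convex compact set $K := \llangle f\rrangle_Q \subseteq \mathbb{C}^d$, and then to extend to general measurable $g$ by uniform approximation together with a weak-star compactness argument in $L^\infty(Q\times Q)$.

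For the simple case, suppose $g = \sum_{k=1}^N c_k\, 1_{E_k}$ with $\{E_k\}$ a measurable partition of $Q$ and each $c_k \in K$. By the very definition of the convex body average, each $c_k$ admits a representation $c_k = \frac{1}{|Q|}\int_Q \varphi_k(w) f(w)\, dV(w)$ for some scalar function $\varphi_k$ on $Q$ with $\|\varphi_k\|_\infty \leq 1$. Defining
\[H(z,w) := \frac{1}{|Q|}\sum_{k=1}^N 1_{E_k}(z)\, \varphi_k(w),\]
we obtain a jointly measurable kernel with $|H(z,w)| \leq 1/|Q|$ pointwise and $\int_Q H(z,w) f(w)\,dV(w) = g(z)$ by direct computation. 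For general $g$, compactness of $K$ allows it to be covered by finitely many balls of radius $\epsilon$ with centers in $K$; partitioning $Q$ by preimages yields a simple function $g_\epsilon : Q\to K$ with $\|g-g_\epsilon\|_\infty<\epsilon$. Taking $\epsilon = 1/n$ produces uniformly convergent simple approximants $g_n\to g$ and, via the first step, corresponding kernels $H_n$ with $\|H_n\|_\infty \leq 1/|Q|$ satisfying the integral identity for $g_n$.

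By Banach-Alaoglu applied in $L^\infty(Q\times Q) = L^1(Q\times Q)^*$, a subsequence $H_{n_j}$ converges weak-star to some $H$ obeying the same pointwise bound $\|H\|_\infty \leq 1/|Q|$. For any test function $\phi\in L^1(Q)$, the tensor $\phi\otimes f$ belongs to $L^1(Q\times Q)$ since $\|\phi\otimes f\|_1 = \|\phi\|_1\|f\|_1 < \infty$, so pairing the weak-star convergence of $H_{n_j}$ against $\phi\otimes f$, applying Fubini (justified by the uniform bound on $H_{n_j}$ and $f\in L^1$), and using uniform convergence $g_n\to g$ yields
\[\int_Q \phi(z)\int_Q H(z,w) f(w)\,dV(w)\,dV(z) = \lim_{j\to\infty}\int_Q \phi(z) g_{n_j}(z)\,dV(z) = \int_Q \phi(z) g(z)\,dV(z).\]
Since $\phi\in L^1(Q)$ is arbitrary and both $z\mapsto \int_Q H(z,w)f(w)\,dV(w)$ and $g$ lie in $L^\infty(Q)$, the standard $L^\infty$--$L^1$ duality forces $\int_Q H(z,w) f(w)\,dV(w) = g(z)$ for almost every $z\in Q$. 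The main technical point is verifying that the weak-star limit $H$ still encodes the integral representation; this reduces to checking that tensor-product test functions $\phi\otimes f$ are valid elements of $L^1(Q\times Q)$, which converts the pointwise identity into a duality pairing preserved under weak-star convergence.
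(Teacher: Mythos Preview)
The paper does not prove this lemma; it is quoted verbatim from \cite[Lemma 2.7]{NPTV} and used as a black box. Your argument is a correct self-contained proof: the simple-function case is immediate from the definition of $\llangle f\rrangle_Q$, and the passage to general measurable $g$ via uniform approximation and weak-star compactness in $L^\infty(Q\times Q)$ is sound (sequential compactness is available because $L^1(Q\times Q)$ is separable for a bounded domain $Q$). One cosmetic remark: since $f$ is $\mathbb C^d$-valued, the pairing against $\phi\otimes f$ should be read componentwise, i.e.\ test against $\phi\otimes f_i$ for each coordinate $f_i\in L^1(Q)$ to conclude $\int_Q H(z,w)f_i(w)\,dV(w)=g_i(z)$ a.e.\ for every $i$; this is implicit in what you wrote but worth making explicit.
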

Thus to estimate $L$, it suffices to estimate all operators of the form 
\[f\mapsto \sum_{\hat K_j^k\in \mathcal T}\int_{\hat K^k_j} H_{\hat K^k_j}(z,w)f(w)dV(w),\]
where $H_{\hat K^k_j}$  is supported on $\hat K^k_j\times \hat K^k_j$ and satisfies $\|H_{\hat K^k_j}\|_\infty\leq |{\hat K^k_j}|^{-1}$.

\begin{thm} \label{thm2.14} There exists a constant $C$ such that for any $f\in L^1(\Omega, \mathbb C^d)$ and $z\in \Omega$,
	\[P(f)(z)\in CL f(z).\]
\end{thm}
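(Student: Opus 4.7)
The plan is to decompose the integral $P(f)(z)=\int_\Omega K_\Omega(z;\bar w)f(w)\,dV(w)$ into pieces supported on quasi-metric annular shells around $z$, and show that each piece lies in a constant multiple of a convex body average $\llangle f\rrangle_{\hat K^k_*}$ over a dyadic tent $\hat K^k_*$ containing $z$. Since $Lf(z)$ is the Minkowski sum of such averages over all tents in the chosen collections that contain $z$, summing the pieces will yield $P(f)(z)\in C\cdot Lf(z)$.

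Fix $z\in\Omega$. For each scale $r_k=s^{-k}\delta$ with $r_k\geq|\rho(z)|$, invoke Lemma \ref{lem3.8} to select a dyadic tent $\hat K^k_*\in\mathcal T_{l_k}$ satisfying $B^\#(\pi(z),r_k)\subseteq \hat K^k_*$ and $|\hat K^k_*|\approx|B^\#(\pi(z),r_k)|$. Each such tent contains $z$, since $z\in B^\#(\pi(z),r_k)$ as soon as $r_k\geq|\rho(z)|$. Let $k^*$ be the largest index with $r_{k^*}\gtrsim|\rho(z)|$; as $B^\#(\pi(z),r)=\Omega$ for $r$ large enough, only finitely many scales are involved. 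Decompose
\[
\Omega\;=\;\tilde A_{\mathrm{inner}}\ \sqcup\ \bigsqcup_{k<k^*}\tilde A_k,
\]
with $\tilde A_k:=B^\#(\pi(z),r_k)\setminus B^\#(\pi(z),r_{k+1})$ and $\tilde A_{\mathrm{inner}}:=B^\#(\pi(z),r_{k^*})$. On $\tilde A_k$, the moreover-clause of Theorem \ref{thm4.5} ensures that $w\notin B^\#(\pi(z),r_{k+1})$ forces $t(z,w)\gtrsim r_{k+1}\approx r_k$, while on $\tilde A_{\mathrm{inner}}$ the bound $t(z,w)\geq|\rho(z)|\approx r_{k^*}$ is immediate. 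Theorem \ref{thm4.5} then yields the uniform estimate
\[
|K_\Omega(z;\bar w)|\;\lesssim\;|B^\#(\pi(z),r_k)|^{-1}\;\approx\;|\hat K^k_*|^{-1}\qquad\text{on }\tilde A_k,
\]
and analogously on $\tilde A_{\mathrm{inner}}$. Also $\tilde A_k\subseteq B^\#(\pi(z),r_k)\subseteq \hat K^k_*$.

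Denoting the implicit constant above by $C_0$ and setting $\varphi_k(w):=C_0^{-1}|\hat K^k_*|\,\mathbf 1_{\tilde A_k}(w)K_\Omega(z;\bar w)$, we have $\|\varphi_k\|_\infty\leq 1$ and
\[
\int_{\tilde A_k}K_\Omega(z;\bar w)f(w)\,dV(w)\;=\;C_0\langle\varphi_k f\rangle_{\hat K^k_*}\;\in\;C_0\cdot\llangle f\rrangle_{\hat K^k_*}.
\]
Summing over all shells,
\[
P(f)(z)\;\in\;C_0\sum_k\llangle f\rrangle_{\hat K^k_*}\;\subseteq\;C_0\cdot Lf(z),
\]
since each $\hat K^k_*$ contains $z$ and belongs to one of the collections defining $L$; taking $C=C_0$ finishes the proof. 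The principal technical hurdle is the scale-by-scale geometric alignment just described: identifying a dyadic tent of volume matching $|B^\#(\pi(z),r_k)|$ that simultaneously contains $z$ and the corresponding quasi-metric shell, so that the kernel bound collapses to $|\hat K^k_*|^{-1}$. A tempting attempt to use the ancestor chain of a single collection from Lemma \ref{lem2.9} fails because ancestors of a kube with good polydisc containment need not themselves have such containment; selecting possibly different collections at different scales circumvents this, which is legitimate because $L$ already sums across all the collections.
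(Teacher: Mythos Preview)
Your proof is correct and follows essentially the same strategy as the paper's: decompose $\Omega$ into annular shells $B^\#(\pi(z),s^{-k}\delta)\setminus B^\#(\pi(z),s^{-k-1}\delta)$, bound the kernel on each shell via Theorem \ref{thm4.5}, and absorb each piece into a convex body average over a matching dyadic tent supplied by Lemma \ref{lem3.8}. One small caveat: the paper handles the case $z\notin N_{\epsilon_0}(\mathbf b\Omega)$ separately at the outset (there $\pi(z)$ is undefined and the kernel is globally bounded, so $P(f)(z)\in C\llangle f\rrangle_\Omega$ directly), whereas your write-up invokes $\pi(z)$ throughout; this is easily patched by inserting that case distinction.
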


\begin{proof}Let $\{\mathcal T_l\}_{l=1}^N$ be collections of dyadic tents induced by $\{\mathcal Q_l\}_{l=1}^N$ in Lemma \ref{lem3.8}. If $z\in \Omega\backslash N_{\epsilon_0}(\mathbf b\Omega)$, then $|K_\Omega(z;\cdot)|\lesssim 1$ and 
	\[P(f)(z)=\int_\Omega K_\Omega(z;\bar w) f(w)dV(w)\in C\llangle f\rrangle_\Omega\subseteq C Lf(z),\]
	for some constant $C$.
	If  $z\in \Omega\cap N_{\epsilon_0}(\mathbf b\Omega)$, then we can set $k_0:=\max\{k\in \mathbb N:z\in B^\#(\pi(z),s^{-k}\delta)\}$.
	Theorem \ref{thm4.5} yields that for $l\in \{1,\dots, k_0-1\}$ and point $w\in B^\#(\pi(z),s^{-l}\delta)\backslash B^\#(\pi(z),s^{-l-1}\delta)$, 
	\[|K_\Omega(z;\bar w)|\lesssim |B^\#(\pi(z),s^{-l}\delta)|^{-1}.\]
	By Lemma \ref{lem3.8}, we can find $k_0$ tents $\{\hat K_j\}_{j=1}^{k_0}$ from $\{\mathcal T_l\}_{l=1}^N$  such that for each $j$, \[B^\#(\pi(z),s^{-j}\delta)\subseteq\hat K_j\;\text{ and }\;|\hat K_j|\approx |B^\#(\pi(z),s^{-j}\delta)|.\]
	Set \begin{align}U_0&=\Omega\backslash B^\#(\pi(z),\delta)\nonumber\\ U_j&=B^\#(\pi(z),s^{-j}\delta)\backslash B^\#(\pi(z),s^{-j-1}\delta) \;\text{ for }\;j=1,\dots,k_0-1.\nonumber\end{align}
	Then there is a constant $C$ independent of $j$ and $z$ such that
	\[\int_{U_j}K_\Omega(z;\bar w) f(w)dV(w)\in C\llangle f\rrangle_{\hat K_j}.\]
	This containment implies
\begin{align}
P(f)(z)&=\int_\Omega K_\Omega(z;\bar w) f(w)dV(w)\nonumber\\&=\sum_{j=0}^{k_0-1}\int_{U_j}K_\Omega(z;\bar w) f(w)dV(w)\in CLf(z),
\end{align}	
which completes the proof.
\end{proof}
\section{A step averaging weight $\mathcal W$ and its properties}
In this section, we introduce the step averaging weight which will play a crucial rule in the proof of Theorem \ref{thm1.2}.
\begin{de}\label{de4.1}Let $W$ be a matrix weight over $\Omega$. For a collection $\mathcal T$ of dyadic tents, we define a step weight $\mathcal W$ to be
\[\mathcal W(z):=\sum_{\hat K^k_j\in \mathcal T}\langle W\rangle_{ K^k_j}1_{K^k_j}(z).\]
For collections $\{\mathcal T_l\}_{l=1}^N$ of tents that satisfies both Lemmas \ref{lem3.8} and \ref{lem2.9}, we 
define $\tilde{\mathcal W}$ to be
\[\tilde{\mathcal W}(z):=\sum_{l=1}^N\mathcal W_l(z)=\sum_{l=1}^N\sum_{\hat K^k_j\in \mathcal T_l}\langle W\rangle_{ K^k_j}1_{K^k_j}(z).\]
\end{de}
Since the $K^k_j$'s are disjoint, it is easy to see that 
\[\mathcal W^{-1}(z)=\sum_{\hat K^k_j\in \mathcal T}\langle W\rangle^{-1}_{ K^k_j}1_{K^k_j}(z).\]
\begin{lem}\label{lem4.2}
	If $W$ is a matrix $\mathcal B_2$ weight, then $\mathcal W$ is also a matrix $\mathcal B_2$ weight with \begin{equation}
		\label{4.1}
		\sup_{\hat K^k_j\in \mathcal T}\bignm\langle \mathcal W^{-1}\rangle^{1/2}_{\hat K^k_j}\langle \mathcal W\rangle^{1/2}_{\hat K^k_j}\bignm^2\leq \mathcal B_2(W).\end{equation}
\end{lem}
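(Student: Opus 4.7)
The plan is to reduce an average of $\mathcal{W}$ over a tent $\hat K^k_j$ to an average of the original weight $W$ by partitioning the tent into its descendant kubes, and then to compare $\langle \mathcal W^{-1}\rangle$ with $\langle W^{-1}\rangle$ via the operator Jensen inequality for the map $A\mapsto A^{-1}$. The inequality in the lemma will then follow from elementary monotonicity of the operator norm on positive semidefinite matrices.

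First, by Lemma \ref{3.10}, the kubes in $\mathcal T$ are pairwise disjoint and cover $\Omega$; moreover each tent $\hat K^k_j$ is exactly the disjoint union of those kubes $K^l_i$ (with $l\geq k$) that it contains. Restricting the defining sum for $\mathcal W$ to $\hat K^k_j$, only those kubes contribute, so
\begin{equation*}
\langle \mathcal W\rangle_{\hat K^k_j}=\frac{1}{|\hat K^k_j|}\sum_{K^l_i\subseteq \hat K^k_j}|K^l_i|\,\langle W\rangle_{K^l_i}=\frac{1}{|\hat K^k_j|}\sum_{K^l_i\subseteq \hat K^k_j}\int_{K^l_i}W\,dV=\langle W\rangle_{\hat K^k_j}.
\end{equation*}
The key input for the inverse average is operator convexity of $A\mapsto A^{-1}$ on the positive definite cone, which gives the operator Jensen inequality $\langle W\rangle_{K^l_i}^{-1}\leq \langle W^{-1}\rangle_{K^l_i}$ in the positive semidefinite order on each kube. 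Averaging this PSD inequality over the same partition,
\begin{equation*}
\langle \mathcal W^{-1}\rangle_{\hat K^k_j}=\frac{1}{|\hat K^k_j|}\sum_{K^l_i\subseteq \hat K^k_j}|K^l_i|\,\langle W\rangle_{K^l_i}^{-1}\;\leq\;\frac{1}{|\hat K^k_j|}\sum_{K^l_i\subseteq \hat K^k_j}\int_{K^l_i}W^{-1}\,dV=\langle W^{-1}\rangle_{\hat K^k_j}.
\end{equation*}

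Finally, writing $N:=\langle \mathcal W\rangle_{\hat K^k_j}=\langle W\rangle_{\hat K^k_j}$, $M:=\langle \mathcal W^{-1}\rangle_{\hat K^k_j}$, $M':=\langle W^{-1}\rangle_{\hat K^k_j}$, the identity $\bignm A^{1/2}B^{1/2}\bignm^2=\bignm B^{1/2}AB^{1/2}\bignm$ for PSD $A,B$ (since $\|X\|^2=\|X^*X\|$) together with the monotonicity of the operator norm on PSD matrices and the PSD inequality $M\leq M'$ produces
\begin{equation*}
\bignm \langle \mathcal W^{-1}\rangle_{\hat K^k_j}^{1/2}\langle \mathcal W\rangle_{\hat K^k_j}^{1/2}\bignm^2=\bignm N^{1/2}MN^{1/2}\bignm\leq \bignm N^{1/2}M'N^{1/2}\bignm=\bignm \langle W^{-1}\rangle_{\hat K^k_j}^{1/2}\langle W\rangle_{\hat K^k_j}^{1/2}\bignm^2\leq \mathcal B_2(W),
\end{equation*}
where the last bound uses the equivalent formulation of $\mathcal B_2(W)$ as a supremum over the dyadic family $\mathscr T$ recorded at the end of Section 2.2. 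Taking the supremum over $\hat K^k_j\in \mathcal T$ yields (\ref{4.1}). I do not expect a serious obstacle: the whole argument pivots on the single observation that $\mathcal W$ has been engineered so that its tent average equals the tent average of $W$, together with the textbook operator Jensen inequality; the only care needed is to remember that $\bignm A^{1/2}B^{1/2}\bignm=\bignm B^{1/2}A^{1/2}\bignm$ so the final norm comparison can be carried out in the convenient ``sandwiched'' form.
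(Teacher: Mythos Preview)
Your proof is correct and follows essentially the same route as the paper: both arguments rest on the identity $\langle \mathcal W\rangle_{\hat K^k_j}=\langle W\rangle_{\hat K^k_j}$ together with the comparison $\langle W\rangle_{K^l_i}^{-1}\leq \langle W^{-1}\rangle_{K^l_i}$ on each kube. The only cosmetic difference is that the paper works vectorwise and invokes the equivalent statement that $\langle W^{-1}\rangle_{K^l_m}^{1/2}\langle W\rangle_{K^l_m}^{1/2}$ is expanding (citing Treil--Volberg), whereas you phrase the same fact as the operator Jensen inequality for $A\mapsto A^{-1}$ and then pass to norms via the sandwiched form $\bignm N^{1/2}MN^{1/2}\bignm$; the content is identical.
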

\begin{proof}
By its definition, $\langle \mathcal W\rangle_{\hat K^k_j}=\langle  W\rangle_{\hat K^k_j}$. Given a unit vector $v\in \mathbb C^d$, 
\begin{align}\label{4.4}
\nm\langle \mathcal W^{-1}\rangle^{1/2}_{\hat K^k_j}\langle \mathcal W\rangle^{1/2}_{\hat K^k_j}v\nm^2&=\langle  \langle  \mathcal W\rangle^{1/2}_{\hat K^k_j}\langle \mathcal W^{-1}\rangle_{\hat K^k_j}\langle \mathcal W\rangle^{1/2}_{\hat K^k_j}v,v\rangle\nonumber\\&=|\hat K^k_j|^{-1}\sum_{K^l_m\subseteq \hat K^k_j}|K^l_m|\langle  \langle  W\rangle^{1/2}_{\hat K^k_j}\langle W\rangle_{K^l_m}^{-1}\langle  W\rangle^{1/2}_{\hat K^k_j}v,v\rangle\nonumber\\&=|\hat K^k_j|^{-1}\sum_{K^l_m\subseteq \hat K^k_j}|K^l_m|\nm\langle W\rangle_{K^l_m}^{-1/2}\langle  W\rangle^{1/2}_{\hat K^k_j}v\nm^2
\nonumber\\&=|\hat K^k_j|^{-1}\sum_{K^l_m\subseteq \hat K^k_j}|K^l_m|\nm\langle W\rangle_{K^l_m}^{-1/2}\langle W^{-1}\rangle_{K^l_m}^{-1/2}\langle W^{-1}\rangle_{K^l_m}^{1/2}\langle  W\rangle^{1/2}_{\hat K^k_j}v\nm^2
\nonumber\\&\leq|\hat K^k_j|^{-1}\sum_{K^l_m\subseteq \hat K^k_j}|K^l_m|\nm\langle W^{-1}\rangle_{K^l_m}^{1/2}\langle  W\rangle^{1/2}_{\hat K^k_j}v\nm^2
=\nm\langle W^{-1}\rangle_{\hat K^k_j}^{1/2}\langle  W\rangle^{1/2}_{\hat K^k_j}v\nm^2.
\end{align} The last inequality above uses the fact that $\langle  W^{-1}\rangle^{1/2}_{\hat K^k_j}\langle  W\rangle^{1/2}_{\hat K^k_j}$ is expanding. See  \cite[Corollary 3.3]{TV97}. 
\end{proof}

By the technique of \cite[Lemma 3.5]{TV97}, the weight $\langle \mathcal W^{-1}\rangle^{{1}/{2}}_{\hat K^k_j}\mathcal W(z)\langle \mathcal W^{-1}\rangle^{{1}/{2}}_{\hat K^k_j}$ is also a matrix weight {satisfying (\ref{4.1}).} Moreover, for a unit vector $v\in\mathbb C^d$, the scalar function \[\omega(v;z):=\left\langle\langle \mathcal W^{-1}\rangle^{{1}/{2}}_{\hat K^k_j}\mathcal W(z)\langle \mathcal W^{-1}\rangle^{{1}/{2}}_{\hat K^k_j}v,v\right\rangle,\]
is a scalar weight {satisfying
\[\sup_{\hat K^k_j\in \mathcal T}\langle \mathcal \omega(v;\cdot)^{-1}\rangle_{\hat K^k_j}\langle \mathcal \omega(v;\cdot)\rangle_{\hat K^k_j}\lesssim \mathcal B_2(W).\]}

Note that $\omega(v;z)$ is a fixed constant over each kube $K^k_j$. 
The next lemma shows the $\omega(v;z)$ satisfies the reverse H\"older inequality. In the case $\Omega=\mathbb D$, the lemma follows from \cite[Theorem 1.7]{Aleman}.
\begin{lem}\label{lem4.5}
	The function $\omega(v;z)$ satisfies the reverse H\"older inequality over the tent $\hat K^k_j\in \mathcal T$, i.e. there exists a constant $r>1$ such that
	\begin{equation}\label{4.5}
	\langle\omega(v;z)^r\rangle^{1/r}_{\hat K^k_j}\lesssim \langle\omega(v;z)\rangle_{\hat K^k_j}.
	\end{equation}
	Moreover, the constant $r$ above can be chosen such that $r-1\approx \mathcal B^{-1}_2(W)$.
\end{lem}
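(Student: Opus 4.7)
The plan is to adapt the standard quantitative proof of the reverse H\"older inequality for dyadic $\mathcal A_\infty$ weights to the tent system $\mathcal T$. Fix $v\in\mathbb C^d$ and abbreviate $\omega:=\omega(v;\cdot)$; by the remark immediately preceding the lemma, $\omega$ is a scalar dyadic $\mathcal B_2$ weight on $\mathcal T$ with constant $\lesssim \mathcal B_2(W)$, and $\omega$ is constant on each kube. The first step is an $\mathcal A_\infty$-type self-improvement: for any tent $\hat K\in\mathcal T$ and measurable $E\subseteq \hat K$, Cauchy--Schwarz applied to $|E|=\int_E\omega^{1/2}\omega^{-1/2}\,dV$ combined with the $\mathcal B_2$ condition yields
\[\frac{\omega(E)}{\omega(\hat K)}\geq \frac{1}{\mathcal B_2(W)}\left(\frac{|E|}{|\hat K|}\right)^2,\]
so that $|E|\geq |\hat K|/2$ forces $\omega(E)\geq c\,\mathcal B_2(W)^{-1}\omega(\hat K)$.

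Next I would run a Calder\'on--Zygmund stopping-time decomposition relative to $\mathcal T$. Fix $\hat K^k_j$, set $\lambda_0:=\langle\omega\rangle_{\hat K^k_j}$, and let $\mathcal F_\lambda$ denote the collection of maximal subtents $\hat K\in\mathcal T$ of $\hat K^k_j$ satisfying $\langle\omega\rangle_{\hat K}>\lambda$. A crucial geometric input is the parent--child volume comparability $|\hat K^*|\leq C_0|\hat K|$ whenever $\hat K$ is a child of $\hat K^*$ in $\mathcal T$; this can be extracted by combining Lemma \ref{lem3.5}(4)--(5) with the doubling (\ref{3.3}) iterated to absorb the fixed scale factor $s$, together with the size formula (\ref{3.1}). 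Given this comparability, maximality forces $\langle\omega\rangle_{\hat K}\leq C_0\lambda$ for each $\hat K\in\mathcal F_\lambda$. Setting $\lambda_m:=(2C_0)^m\lambda_0$ and grouping members of $\mathcal F_{\lambda_{m+1}}$ under their smallest ancestor in $\mathcal F_{\lambda_m}$, I obtain
\[\sum_{\hat K'\in\mathcal F_{\lambda_{m+1}},\,\hat K'\subseteq \hat K}|\hat K'|\leq \frac{\omega(\hat K)}{\lambda_{m+1}}\leq \frac{|\hat K|}{2}\]
for every $\hat K\in\mathcal F_{\lambda_m}$. Applying the first paragraph to the complement $E=\hat K\setminus\bigcup\hat K'$ then yields the geometric $\omega$-decay $\omega(E_m)\leq (1-c/\mathcal B_2(W))^m\omega(\hat K^k_j)$, where $E_m:=\bigcup_{\hat K\in \mathcal F_{\lambda_m}}\hat K$.

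Finally, a layer cake integration gives
\[\int_{\hat K^k_j}\omega^{1+\epsilon}\,dV\lesssim \lambda_0^\epsilon\omega(\hat K^k_j)+\sum_m \lambda_{m+1}^\epsilon\,\omega(E_m),\]
and the series converges provided $(2C_0)^\epsilon(1-c/\mathcal B_2(W))<1$, i.e.\ $\epsilon\lesssim \mathcal B_2(W)^{-1}$. This produces $\langle\omega^{1+\epsilon}\rangle_{\hat K^k_j}^{1/(1+\epsilon)}\lesssim \langle\omega\rangle_{\hat K^k_j}$ with $r=1+\epsilon$ and $r-1\approx \mathcal B_2(W)^{-1}$, as desired. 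The step I expect to be the main obstacle is the parent--child volume comparability for tents, needed to convert $\langle\omega\rangle_{\hat K^*}\leq \lambda$ into $\langle\omega\rangle_{\hat K}\leq C_0\lambda$; this property fails in generic metric measure spaces and here depends on the simple-domain geometry through a careful interplay of (\ref{2.14}), (\ref{2.15}), (\ref{3.1}), and (\ref{3.3}) under the variable exponents $\tau_j(p,\delta)$.
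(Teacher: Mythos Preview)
Your argument is correct and follows the classical Coifman--Fefferman route to reverse H\"older for $A_p$ weights, adapted to the tent system $\mathcal T$. The paper takes a different path: it first uses the weighted maximal bound $\|M_{\mathcal T}\|_{L^2(\omega^{-1})}\lesssim \mathcal B_2(W)$ together with a corona decomposition to prove the $L\log L$ inequality $\int_{\hat K^k_j}\omega\log(e+\omega)\,dV\lesssim \mathcal B_2(W)\int_{\hat K^k_j}\omega\,dV$, then invokes an argument from Duoandikoetxea's text to deduce an $A_\infty$-type condition with exponential constant $\alpha\approx\exp\{-\mathcal B_2(W)\}$, and finally reruns the corona with ratio $R=\alpha^{-1}$ to get $r-1\approx 1/\log R\approx\mathcal B_2(W)^{-1}$. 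Your Cauchy--Schwarz step produces the sharper polynomial $A_\infty$ inequality $\omega(E)/\omega(\hat K)\geq \mathcal B_2(W)^{-1}(|E|/|\hat K|)^2$ directly, bypassing the maximal function and the $L\log L$ detour; both routes land on the same exponent, but yours is more elementary and self-contained.

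Two remarks. The parent--child volume comparability you flag as the main obstacle is in fact already established and used in the paper: see the constant $c$ in inequality (\ref{4.7}), justified via Lemma \ref{3.10}(3), which in turn rests on (\ref{3.6}) and the doubling (\ref{2.12}). Your layer-cake step also implicitly uses $\omega(z)\lesssim\langle\omega\rangle_{\hat K}$ whenever $z$ lies in the kube of $\hat K$; this is where ``$\omega$ constant on kubes'' together with $|K^k_j|\approx|\hat K^k_j|$ enters, and the paper records the same observation in its proof.
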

\begin{proof}
	On the fixed tent $\hat K^k_j$ and a constant $R>1$ to be determined, we apply a corona decomposition of $\hat K^k_j$ as follows:
	\begin{enumerate}
		\item First, we define the set of stopping children  $\mathcal L(\hat K^l_m)$ of a tent $\hat K^l_m\in\mathcal T$:
		\[\mathcal L(\hat K^l_m):=\left\{\text{ maximal tents }\hat K^p_q \text{ such that }\frac{\int_{\hat K^p_q}\omega(v;z)dV}{|\hat K^p_q|}>R\frac{\int_{\hat K^l_m}\omega(v;z)dV}{|\hat K^l_m|}\right\}.\]
		\item Set $\mathcal L_1:=\mathcal L(\hat K^k_j)$ and for $j\geq 2$, set $\mathcal L_i:=\cup_{\hat K^l_m\in\mathcal L_{i-1}}\mathcal L(\hat K^l_m)$ and $\mathcal L:=\cup_{i\geq 1}\mathcal L_i$. 
	\end{enumerate}
	Then we have for $\hat K^l_m\in \mathcal L_i$,
	\begin{equation}\label{4.61}\sum_{\hat K^p_q\subseteq\hat K^l_m, \hat K^p_q\in \mathcal L_{m+1}}|\hat K^p_q|\leq R^{-1}|\hat K^l_m|.\end{equation}
	Since tents in $\mathcal L_m$ are maximal, we also have for $\hat K^l_m\in \mathcal L_i$,
	\begin{equation}\label{4.7}R^i\frac{\int_{\hat K^k_j}\omega(v;z)dV}{|\hat K^k_j|}\leq \frac{\int_{\hat K^l_m}\omega(v;z)dV}{|\hat K^l_m|}\leq (cR)^i\frac{\int_{\hat K^k_j}\omega(v;z)dV}{|\hat K^k_j|}.\end{equation}
	Here $c$ can be chosen  to be the supremum of  $|\hat K^{l}_{m}|/|\hat K^{l+1}_{m^\prime}|$ over all tents $\hat K^{l}_{m}$ and $\hat K^{l+1}_{m^\prime}$ satisfying $\hat K^{l+1}_{m^\prime}\subseteq \hat K^{l}_{m}$. By Lemma \ref{3.10}, $c$ is finite. 
	Recall the maximal operator $M_{\mathcal T}$ given by
	\[M_{\mathcal T}f(z)=\sup_{\hat K^k_j\in \mathcal T}\langle |f|\rangle_{\hat K^k_j}1_{\hat K^k_j}(z).\]
	Then (\ref{4.7}) implies
	\[M_{\mathcal T}(1_{\hat K^k_j}\omega(v;z))\lesssim \left(1+\sum_i\sum_{\hat K^l_m\in \mathcal L_i}(cR)^i1_{\hat K^l_m}\right)\frac{\int_{\hat K^k_j}\omega(v;z)dV}{|\hat K^k_j|}.\]
	Since $\omega(v;z)$ is a fixed constant over each kube $K^l_m$, we have for $z\in K^l_m$, \[\omega(v;z)=\frac{\int_{K^l_m}\omega(v;z)dV}{|K^l_m|}\lesssim\frac{\int_{\hat K^l_m}\omega(v;z)dV}{|\hat K^l_m|}.\] Thus  $\omega(v;z)\lesssim M_{\mathcal T}(\omega(v;\cdot)1_{\hat K^k_j})(z)$. By applying a scalar multiplication to  $\omega(v;z)$, we may assume that $\langle\omega(v;\cdot)\rangle_{\hat K^k_j}=1$. Then, for a point $z\in \hat K^k_j\backslash\bigcup_{\hat K^{l^\prime}_{m^\prime}\in\mathcal L_i}\hat K^{l^\prime}_{m^\prime}$, we have
	\begin{align}\label{4.81}
	M_{\mathcal T}(\omega(v;\cdot)1_{\hat K^k_j})(z)\approx \langle\omega(v;\cdot)\rangle_{\hat K^l_m}\approx c^i\langle\omega(v;\cdot)\rangle_{\hat K^k_j}=c^i,
	\end{align}
	which also implies
	\begin{align}\label{4.82}
	\log(e+\omega(v;z))\lesssim i.
	\end{align}
	Since $\omega(v;z)$ is a scalar $\mathcal B_2$ weight, the proof of \cite[Lemma 7.1.9(b)]{Grafakos} implies that \[\|M_{\mathcal T}\|_{L^2(\omega^{-1}(v;\cdot))}\lesssim \mathcal B_2(\omega(v;\cdot))\leq \mathcal B_2(W).\] Then H\"older's inequality implies
\begin{align}\int_{\hat K^k_j}M_{\mathcal T}(\omega(v;\cdot)1_{\hat K^k_j})dV&=\int_{\hat K^k_j}M_{\mathcal T}(\omega(v;\cdot)1_{\hat K^k_j})\omega^{-1/2}(v;\cdot)\omega^{1/2}(v;\cdot)dV\nonumber\\&\leq \left(\int_{\hat K^k_j}M^2_{\mathcal T}(\omega(v;\cdot)1_{\hat K^k_j})\omega^{-1}(v;\cdot)dV\right)^{1/2} \left(\int_{\hat K^k_j}\omega^{}(v;\cdot)dV\right)^{1/2}\nonumber\\&\leq \mathcal B_2(\omega(v,\cdot))\left(\int_{\hat K^k_j}\omega^2(v;\cdot)\omega^{-1}(v;\cdot)dV\right)^{1/2} \left(\int_{\hat K^k_j}\omega^{}(v;\cdot)dV\right)^{1/2}\nonumber\\&\leq\mathcal B_2(W)\int_{\hat K^k_j}\omega(v;z)dV(z).\end{align}
	This together with (\ref{4.81}) and (\ref{4.82}) implies, 
	\begin{align}
	\mathcal B_2(W)\int_{\hat K^k_j}\omega(v;z)dV(z)\gtrsim&\int_{\hat K^k_j}M_{\mathcal T}(\omega(v;\cdot)1_{\hat K^k_j})dV\nonumber\\
	\gtrsim&\int_{\hat K^k_j}\sum_{i\geq 1}\sum_{\hat K^l_m\in \mathcal L_i}\langle\omega(v;\cdot)\rangle_{\hat K^l_m} 1_{\hat K^l_m}dV\nonumber\\
	=&\int_{\hat K^k_j}\omega(v;\cdot)\sum_{i\geq 1}i\sum_{\hat K^l_m\in \mathcal L_i}1_{\hat K^k_j\backslash\cup_{\hat K^{l^\prime}_{m^\prime}\in\mathcal L_i}\hat K^{l^\prime}_{m^\prime}}+1dV\nonumber\\\approx&\int_{\hat K^k_j}\omega(v;z)\log(e+M_{\mathcal T}(\omega(v;\cdot))(z))dV
	\nonumber\\\gtrsim&\int_{\hat K^k_j}\omega(v;z)\log(e+\omega(v;z))dV.
	\end{align}
	Then an argument as in the proof of  \cite[Theorem 4.1]{Duoandikoetxea} implies for $0<\beta<1$, there exists a constant $\alpha\approx  c(\beta)\exp\{-\mathcal B_2(W)\}$ such that for $\hat K^l_m\subseteq \hat K^k_j$ with \[\int_{\hat K^l_m}\omega(v;z)dV\leq \beta \int_{\hat K^k_j}\omega(v;z)dV,\]  we have \[|\hat K^l_m|\leq \alpha|\hat K^k_j|.\] 
	Now we are ready to show (\ref{4.5}). Choose  $\beta=(4c)^{-1}$ where $c$ is the constant in (\ref{4.7}). Set $R=\alpha^{-1}\approx\exp\{\mathcal B_2(W)\}$. Then we have 
	\begin{align}\label{4.11}
	\langle\omega(v;\cdot)^r\rangle_{\hat K^k_j}&\lesssim \langle (M_{\mathcal T}(\omega(v;\cdot)1_{\hat K^k_j})(z))^r\rangle_{\hat K^k_j}\nonumber\\&\lesssim \frac{\left(\int_{\hat K^k_j}\omega(v;z)dV\right)^r}{|\hat K^k_j|^{1+r}}\int_{\hat K^k_j}\left(1+\sum_i\sum_{\hat K^l_m\in \mathcal L_i}(cR)^i1_{\hat K^l_m}\right)^rdV\nonumber\\&\approx \frac{\left(\int_{\hat K^k_j}\omega(v;z)dV\right)^r}{|\hat K^k_j|^{1+r}}\int_{\hat K^k_j}\left(1+\sum_{i\geq 1}\sum_{\hat K^l_m\in \mathcal L_i}(cR)^{ir}1_{\hat K^l_m}\right)dV\nonumber\\&= \left(\frac{\int_{\hat K^k_j}\omega(v;z)dV}{|\hat K^k_j|}\right)^r+\frac{\left(\int_{\hat K^k_j}\omega(v;z)dV\right)^r}{|\hat K^k_j|^{1+r}}\sum_{i\geq 1}\sum_{\hat K^l_m\in \mathcal L_i}(cR)^{ir}|\hat K^l_m|\nonumber\\&\leq \left(\frac{\int_{\hat K^k_j}\omega(v;z)dV}{|\hat K^k_j|}\right)^r+\frac{\left(\int_{\hat K^k_j}\omega(v;z)dV\right)^{r-1}}{|\hat K^k_j|^{r}}\sum_{i\geq 1}c^{ir}R^{i(r-1)}\sum_{\hat K^l_m\in \mathcal L_i}\int_{\hat K^l_m}\omega(v;z)dV
	\nonumber\\&\leq \left({\langle \omega(v;)\rangle_{\hat K^k_j}}\right)^r(1+\sum_{i\geq 1}c^{ir}R^{i(r-1)}(4c)^{-i})={\langle \omega(v;)\rangle^r_{\hat K^k_j}}(1+\sum_{i\geq 1}(cR)^{i(r-1)}\frac{1}{4^{i}}).
	\end{align}
	By choosing $r$ such that $(cR)^{(r-1)}=2$, we obtain (\ref{4.5}). Applying the natural logarithm it yields:
\[(r-1)\ln (cR)=\ln 2.\]
Since $R\approx \exp\{\mathcal B_2(W)\}$, we obtain 
$r-1\approx \mathcal B^{-1}_2(W)$.
\end{proof}

\begin{lem}\label{lem4.4}
	For any holomorphic vector-valued function $f$ on $\Omega$, \[\|f\|_{L^2(W)}\lesssim\|f\|_{L^2(\tilde{\mathcal W})}.\]
\end{lem}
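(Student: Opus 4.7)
The plan is to combine Lemma \ref{lem2.9} (polydisc inside a kube at the same volume scale) with the plurisubharmonicity of a ``frozen'' quadratic form, then exchange the order of integration. For each $z\in\Omega$, Lemma \ref{lem2.9} provides a collection index $l=l(z)$ and a kube $K(z) = K^{k(z)}_{j(z)}\in\mathcal T_l$ containing $z$, together with a polydisc $D(z) = D(z,cs^{-k(z)})\subseteq K(z)$ (or a Euclidean ball when $k(z)=-1$) satisfying $|D(z)|\approx|K(z)|$ by the volume estimate in Lemma \ref{3.10}. A measurable selection partitions $\Omega = \bigsqcup_{(l,k,j)}\Omega_{l,k,j}$.

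The pointwise estimate comes from treating $W(z)$ as a constant positive-semidefinite matrix in the variable $w$. In any orthonormal basis $\{e_i\}$ of $\mathbb C^d$,
\[
\langle W(z)f(w), f(w)\rangle \;=\; \|W(z)^{1/2}f(w)\|^2 \;=\; \sum_{i} \bigl|\bigl\langle W(z)^{1/2}e_i,\, f(w)\bigr\rangle\bigr|^2,
\]
a sum of moduli squared of holomorphic functions of $w$, hence plurisubharmonic in $w$. The iterated one-variable mean-value inequality on the polydisc $D(z)$ (or the standard sub-mean-value on a Euclidean ball when $k(z)=-1$), combined with $|D(z)|\approx|K(z)|$, yields
\[
\langle W(z)f(z),f(z)\rangle \;\le\; \frac{C}{|K(z)|}\int_{K(z)}\langle W(z)f(w),f(w)\rangle\, dV(w).
\]

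Integrating this over each piece $\Omega_{l,k,j}$ and swapping via Fubini,
\[
\int_{\Omega_{l,k,j}}\!\!\langle Wf,f\rangle\, dV \;\le\; \frac{C}{|K^k_j|}\int_{K^k_j}\!\Bigl\langle\Bigl(\int_{\Omega_{l,k,j}}\!W(z)\,dV(z)\Bigr) f(w), f(w)\Bigr\rangle\, dV(w).
\]
Since $W\ge 0$ pointwise and $\Omega_{l,k,j}\subseteq K^k_j$, the positive-semidefinite ordering $\int_{\Omega_{l,k,j}}W\,dV\le\int_{K^k_j}W\,dV=|K^k_j|\,\langle W\rangle_{K^k_j}$ gives
\[
\int_{\Omega_{l,k,j}}\!\!\langle Wf,f\rangle\, dV \;\le\; C\int_{K^k_j}\!\langle \langle W\rangle_{K^k_j} f,f\rangle\, dV.
\]
Summing over the (distinct) triples $(l,k,j)$ used in the partition, each sitting in exactly one collection $\mathcal T_l$, dominates the total by
\[
C\sum_{l=1}^N\sum_{\hat K^k_j\in\mathcal T_l}\int_{K^k_j}\langle\langle W\rangle_{K^k_j} f,f\rangle\, dV \;=\; C\int_\Omega \langle\tilde{\mathcal W}f,f\rangle\, dV,
\]
which is the desired inequality.

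The only genuine obstacle is bridging the pointwise value $W(z)$ and the kube average $\langle W\rangle_{K(z)}$: this is impossible without invoking holomorphicity since $W$ is non-constant. The device of ``freezing'' $W(z)$ and using plurisubharmonicity of $\|W(z)^{1/2}f(\cdot)\|^2$ reduces the matter to a Fubini-style swap, and Lemma \ref{lem2.9} provides exactly the right geometric input — a polydisc inside the kube of \emph{comparable volume} — so that the swap converts $W(z)$ into $\langle W\rangle_{K(z)}$ without losing scale.
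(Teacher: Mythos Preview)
Your proof is correct and rests on the same three ingredients as the paper's argument: Lemma \ref{lem2.9}, the plurisubharmonicity of $w\mapsto\langle Mf(w),f(w)\rangle$ for a fixed positive matrix $M$, and a Fubini swap. The difference is purely in the order of operations. The paper first bounds $\tilde{\mathcal W}(z)\gtrsim\langle W\rangle_{D^\star(z)}$, writes this average out as an integral in $\zeta$, then swaps $z$ and $\zeta$---which forces an auxiliary geometric claim (for $\zeta\in cD^\star(z)$ one can recenter to get a polydisc $D'(\zeta)\ni z$ of comparable volume inside $D^\star(z)$)---and finally applies the sub-mean-value inequality in $z$. You instead freeze $W(z)$ and apply the sub-mean-value inequality in $w$ \emph{first}, then partition $\Omega$ according to the kube assignment $z\mapsto K(z)$ from Lemma \ref{lem2.9}; on each piece the kube is constant, so the Fubini step is trivial and the PSD monotonicity $\int_{\Omega_{l,k,j}}W\le\int_{K^k_j}W$ finishes. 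Your ordering is a bit cleaner in that it sidesteps the recentering claim entirely; both yield the same estimate with the same dependence on the structural constants.
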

\begin{proof}Recall $K^{\prime-1}$ and $D(z,cs^{-k})$ from Lemma \ref{lem2.9}. For each $z\in \Omega$, we define a $z$-dependent subset $D^\star(z)$ as follows:
	For $z\in K^{\prime-1}$, we set $D^\star(z)$ to be the maximal Euclidean ball in $K^{-1}$  centered at $z$. For $z\in \Omega \backslash K^{\prime-1}$, we set $D^\star(z)$ to be the polydisc $D(z,cs^{-k})$  from Lemma \ref{lem2.9}. Then for all $z\in \Omega$
	\[\langle\tilde{\mathcal W}(z)f(z), f(z)\rangle\gtrsim\langle\langle W\rangle_{D^\star(z)}f(z), f(z)\rangle.\]
We claim that there is a constant $c<1$ such that for all $\zeta\in cD^\star(z)$, there exists a polydisc (or ball) $D^\prime(\zeta)$ centered $\zeta$ satisfying the following properties: (1) $z\in D^\prime(\zeta)$;
(2) $D^\prime(\zeta)\subseteq D^\star(z)$; (3) $|D^\prime(\zeta)|\approx|D^\star(z)|$. When $D^\star(z)$ is a Euclidean ball, this is obvious. When $D^\star(z)$  is a polydisc, the claim follows from (\ref{2.11}) and (\ref{2.12}). Then Fubini's theorem implies that
	\begin{align}
	\int_{\Omega}\langle\tilde{\mathcal  W} f(z),f(z)\rangle dV(z)&\gtrsim \int_\Omega|D^\star(z)|^{-1}\int_{D^\star(z)}\langle W(\zeta) f(z),f(z)\rangle dV(\zeta)dV(z)\nonumber\\&\gtrsim \int_\Omega|D^\prime(\zeta)|^{-1}\int_{D^\prime(\zeta)}\langle W(\zeta) f(z),f(z)\rangle dV(z)dV(\zeta).
	\end{align}
	Note that $\langle W(\zeta) f(z),f(z)\rangle$ is subharmonic in $z$, we have 
	\[\int_\Omega |D^\prime(\zeta)|^{-1} \int_{D^\prime(\zeta)}\langle W(\zeta) f(z),f(z)\rangle dV(z)dV(\zeta)\geq \int_\Omega\langle W(\zeta) f(\zeta),f(\zeta)\rangle dV(\zeta). \]
	This implies $\|f\|_{L^2(W)}\lesssim\|f\|_{L^2(\tilde{\mathcal W})}$.
\end{proof}
As a consequence of Lemma \ref{lem4.4}, the norm of $P$ on the weighted space $L^2(W)$ can be dominated by the norm of $P$ on the weighted space $L^2(\tilde{\mathcal W})$.
\begin{thm}\label{thm4.3}Let $W$ be  a matrix $\mathcal B_2$ weight on $\Omega$. Then $\|P\|_{L^2(W)}\lesssim \mathcal B^{1/2}_2(W)\|P\|_{L^2(\tilde{\mathcal W})}$.
\end{thm}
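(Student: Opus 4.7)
The plan is the duality approach of Aleman--Constantin~\cite{AC}: apply Lemma~\ref{lem4.4} twice, once for $W$ and once for $W^{-1}$, to trade the original matrix weight for a step averaging weight whenever we evaluate on a holomorphic vector, and use the self-adjointness $P = P^{\ast}$ of the Bergman projection on the unweighted $L^{2}(\Omega)$ to swing the weights through $P$.

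Since $Pf$ is holomorphic, Lemma~\ref{lem4.4} gives $\|Pf\|_{L^{2}(W)}^{2} \lesssim \|Pf\|_{L^{2}(\tilde{\mathcal W})}^{2}$. Writing $Pf = P(Pf)$ and using $P = P^{\ast}$ on $L^{2}(\Omega)$,
\begin{equation*}
\|Pf\|_{L^{2}(\tilde{\mathcal W})}^{2} = \int_{\Omega}\langle \tilde{\mathcal W}Pf, Pf\rangle\,dV = \int_{\Omega}\langle P(\tilde{\mathcal W}Pf), f\rangle\,dV,
\end{equation*}
and the weighted Cauchy--Schwarz inequality yields $\|Pf\|_{L^{2}(\tilde{\mathcal W})}^{2} \leq \|P(\tilde{\mathcal W}Pf)\|_{L^{2}(W^{-1})}\|f\|_{L^{2}(W)}$. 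Because $W^{-1}$ is matrix $\mathcal B_{2}$ with the same constant and $P(\tilde{\mathcal W}Pf)$ is holomorphic, a second application of Lemma~\ref{lem4.4} (with weight $W^{-1}$) gives
\begin{equation*}
\|P(\tilde{\mathcal W}Pf)\|_{L^{2}(W^{-1})} \lesssim \|P(\tilde{\mathcal W}Pf)\|_{L^{2}(\widetilde{W^{-1}})},
\end{equation*}
where $\widetilde{W^{-1}}$ is the step averaging weight constructed from $W^{-1}$ via the same dyadic collections $\{\mathcal T_{l}\}_{l=1}^{N}$ used to define $\tilde{\mathcal W}$.

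The crucial step is to replace $\widetilde{W^{-1}}$ by $\tilde{\mathcal W}^{-1}$. The per-kube matrix $\mathcal B_{2}$ inequality $\langle W^{-1}\rangle_{K} \leq \mathcal B_{2}(W)\langle W\rangle_{K}^{-1}$, combined with the interlocking structure of the $N$ adjacent dyadic grids afforded by Lemma~\ref{lem2.9} and the doubling property of the non-isotropic balls, yields the (integrated) bound
\begin{equation*}
\|P(\tilde{\mathcal W}Pf)\|_{L^{2}(\widetilde{W^{-1}})} \lesssim \mathcal B_{2}(W)^{1/2}\,\|P(\tilde{\mathcal W}Pf)\|_{L^{2}(\tilde{\mathcal W}^{-1})}.
\end{equation*}
By $L^{2}$-duality of $P = P^{\ast}$ one has $\|P\|_{L^{2}(\tilde{\mathcal W}^{-1})} = \|P\|_{L^{2}(\tilde{\mathcal W})}$, while a direct computation gives $\|\tilde{\mathcal W}Pf\|_{L^{2}(\tilde{\mathcal W}^{-1})} = \|Pf\|_{L^{2}(\tilde{\mathcal W})}$, so
\begin{equation*}
\|P(\tilde{\mathcal W}Pf)\|_{L^{2}(\tilde{\mathcal W}^{-1})} \leq \|P\|_{L^{2}(\tilde{\mathcal W})}\|Pf\|_{L^{2}(\tilde{\mathcal W})}.
\end{equation*}

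Chaining the above produces $\|Pf\|_{L^{2}(\tilde{\mathcal W})}^{2} \lesssim \mathcal B_{2}(W)^{1/2}\|P\|_{L^{2}(\tilde{\mathcal W})}\|Pf\|_{L^{2}(\tilde{\mathcal W})}\|f\|_{L^{2}(W)}$; dividing by $\|Pf\|_{L^{2}(\tilde{\mathcal W})}$ and reapplying Lemma~\ref{lem4.4} on the left-hand side yields $\|P\|_{L^{2}(W)} \lesssim \mathcal B_{2}(W)^{1/2}\|P\|_{L^{2}(\tilde{\mathcal W})}$. The main obstacle is the comparison $\widetilde{W^{-1}} \lesssim \mathcal B_{2}(W)\tilde{\mathcal W}^{-1}$ used in the third display: for a single grid ($N=1$) it is immediate from the kube-wise $\mathcal B_{2}$ inequality, but the summed weight $\tilde{\mathcal W} = \sum_{l=1}^{N}\mathcal W_{l}$ introduces an arithmetic--harmonic mean mismatch among noncommuting positive matrices, so the near-commutativity of the averages $\langle W\rangle_{K(z,l)}$ across the adjacent dyadic systems must be exploited in order to push the factor $\mathcal B_{2}^{1/2}$ through and close the argument.
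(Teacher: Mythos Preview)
Your overall duality skeleton is exactly the paper's: apply Lemma~\ref{lem4.4} to trade $W$ for $\tilde{\mathcal W}$ on the target side, use $P=P^{\ast}$ to pass to the dual pair $(\tilde{\mathcal W}^{-1},W^{-1})$, apply Lemma~\ref{lem4.4} once more with $W^{-1}$ to reach the step weight $\widetilde{W^{-1}}$ (which the paper denotes $\tilde{\mathcal W}_{-1}$), and finally compare $\tilde{\mathcal W}_{-1}$ with $\tilde{\mathcal W}^{-1}$. The gap is precisely the step you yourself flag as the ``main obstacle'': you do not actually prove the pointwise matrix inequality $\tilde{\mathcal W}_{-1}(z)\lesssim\mathcal B_{2}(W)\,\tilde{\mathcal W}(z)^{-1}$ for $N>1$, and the direction you suggest---exploit ``near-commutativity'' of the averages $\langle W\rangle_{K_l(z)}$ across adjacent grids---is not what makes it work.

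The paper's resolution is purely geometric and requires no commutativity at all. Using Lemma~\ref{lem3.6}, Definition~\ref{de3.9}, and Lemma~\ref{lem3.8} (not Lemma~\ref{lem2.9}), one observes that for each $z\in\Omega$ every kube $K^{l}_{m}$ from any of the $N$ grids containing $z$ satisfies $\operatorname{dist}(K^{l}_{m},\mathbf b\Omega)\approx\operatorname{dist}(z,\mathbf b\Omega)$ and $\pi(K^{l}_{m})\subseteq B(\pi(z),r)$ for a common $r$; consequently there is a \emph{single} tent $\hat K^{k}_{j}$ from one of the grids with $K^{l}_{m}\subseteq\hat K^{k}_{j}$ and $|K^{l}_{m}|\approx|\hat K^{k}_{j}|$ for all of them. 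This gives the matrix-order bounds
\[
\tilde{\mathcal W}(z)\lesssim N\langle W\rangle_{\hat K^{k}_{j}}\quad\text{and}\quad\tilde{\mathcal W}_{-1}(z)\lesssim N\langle W^{-1}\rangle_{\hat K^{k}_{j}},
\]
and hence $\nm\tilde{\mathcal W}_{-1}^{1/2}(z)\tilde{\mathcal W}^{1/2}(z)\nm^{2}\lesssim\nm\langle W^{-1}\rangle_{\hat K^{k}_{j}}^{1/2}\langle W\rangle_{\hat K^{k}_{j}}^{1/2}\nm^{2}\leq\mathcal B_{2}(W)$ by the $\mathcal B_{2}$ condition on that single tent. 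The arithmetic--harmonic mismatch you worry about never materializes, because both sums are majorized by the \emph{same} one-term expression before any inversion occurs.
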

\begin{proof}
	By Lemma \ref{lem4.4} and a duality argument, we have 
	\[\|P\|_{L^2(W)}\lesssim \|P:L^2(W)\to L^2(\tilde{\mathcal W})\|=\|P:L^2(\tilde{\mathcal W}^{-1})\to L^2(W^{-1})\|.\]
	Set $\tilde{\mathcal W}_{-1}(z):=\sum_{l=1}^{N}\sum_{\hat K^k_j\in \mathcal T_l}\langle W^{-1}\rangle_{ K^k_j}1_{K^k_j}(z).$
 Applying Lemma \ref{lem4.4} again to the weight $W^{-1}$ yields
	\[\|P\|_{L^2(W)}\lesssim\|P:L^2(\tilde{\mathcal W}^{-1})\to L^2(W^{-1})\|\lesssim \|P:L^2(\tilde{\mathcal W}^{-1})\to L^2(\tilde{\mathcal W}_{-1})\|.\]
	For any $z\in \Omega$, we consider those kubes $K^l_m$ of $\hat K^k_j\in \bigcup_{l=1}^N\mathcal T_l$  that contain the point $z$. If $K^l_m\ni z$, then $\text{dist}(K^l_m,\mathbf b \Omega)\approx \text{dist}(z,\mathbf b \Omega)$. By Lemma \ref{lem3.6} and Definition \ref{de3.9}, there  exists  a constant $r$ such that $\pi(K^l_m)\subseteq B(\pi(z),r)$ and $|K^l_m|\approx |B^\#(\pi(z),r)|$. Then Lemma \ref{lem3.8} implies that there is a dyadic tent $\hat K^k_j\in \bigcup_{l=1}^N\mathcal T_l$ such that $\hat K^k_j\supseteq B^\#(\pi(z),r)$ with $|\hat K^k_j|\approx |B^\#(\pi(z),r)|$. Thus
	\begin{align}\label{4.111}
	\tilde{\mathcal W}(z)&=\sum_{\hat K^l_m\in \bigcup_{l=1}^N\mathcal T_l}\langle W\rangle_{K^l_m}1_{K^l_m}(z)\lesssim N\langle W\rangle_{\hat K^k_j},\\\label{4.12}
	\tilde{\mathcal W}_{-1}(z)&=\sum_{\hat K^l_m\in \bigcup_{l=1}^N\mathcal T_l}\langle W^{-1}\rangle_{K^l_m}1_{K^l_m}(z)\lesssim N\langle W^{-1}\rangle_{\hat K^k_j}. 
	\end{align}
	Inequalities (\ref{4.111}) and (\ref{4.12}) yield
	\[\nm\tilde{\mathcal W}^{1/2}_{-1}(z)\tilde{\mathcal W}^{1/2}(z)\nm^2\lesssim \nm\langle W^{-1}\rangle_{\hat K^k_j}^{1/2}\langle W\rangle_{\hat K^k_j}^{1/2}\nm^2\leq \mathcal B_2(W).\] Hence, we have 
	$ \nm \tilde{\mathcal W}^{1/2}_{-1}(z)f(z)\nm \leq \mathcal B^{1/2}_2(W)\nm\tilde{\mathcal W}^{-1/2}(z)f(z)\nm$, which implies
	\[ \|P\|_{L^2(W)}\lesssim\|P:L^2(\tilde{\mathcal W}^{-1})\to L^2(\tilde{\mathcal W}_{-1})\|\lesssim \mathcal B^{1/2}_2(W)\|P\|_{L^2(\tilde{\mathcal W}^{-1})}=\mathcal B^{1/2}_2(W)\|P\|_{L^2(\tilde{\mathcal W})}.\]
\end{proof}

\section{Proof of Theorem \ref{thm1.2}}
By Theorem \ref{thm4.3} and the fact that $\tilde{\mathcal W}=\sum_{l=1}^{N}\mathcal W_l$, it suffices to show that \[\|P\|_{L^2({\mathcal W})}\lesssim \mathcal B^{3/2}_2(W),\] where  $\mathcal W$ is a weight defined as in Definition \ref{de4.1}.

We begin with the following lemma which can be viewed as a special case of the Carleson Embedding Theorem for $\mathcal T$.  We provide the proof below for the completeness of the paper.
\begin{lem}\label{lem2.16} Let $\mathcal T$ be a collection of dyadic tents as in Lemma \ref{3.10}.
Then for any scalar-valued measurable function $f$ on $\Omega$ and for any $p\in (1,\infty)$,
	\[\sum_{\hat K^k_j\in \mathcal T}\langle f\rangle^p_{\hat K^k_j} |\hat K^k_j|\lesssim (p^\prime)^p\|f\|^p_{L^p(\Omega)}.\]
\end{lem}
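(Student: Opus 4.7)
\medskip

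\noindent\textbf{Proof proposal for Lemma \ref{lem2.16}.}
The plan is to deduce the estimate from the standard dyadic Carleson embedding principle: if $\{a_{\hat K^k_j}\}$ is a Carleson sequence (meaning $\sum_{\hat K^l_m\subseteq\hat K^k_j}a_{\hat K^l_m}\lesssim|\hat K^k_j|$ for every $\hat K^k_j\in\mathcal T$), then $\sum_{\hat K^k_j}a_{\hat K^k_j}\langle f\rangle^p_{\hat K^k_j}\lesssim(p^\prime)^p\|f\|_{L^p}^p$. Thus the only things to verify are (i) that $a_{\hat K^k_j}=|\hat K^k_j|$ is Carleson, and (ii) the sharp dyadic maximal inequality $\|M_{\mathcal T}f\|_{L^p(\Omega)}\leq p^\prime\|f\|_{L^p(\Omega)}$ on our tent system.

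For (i), the crucial observation is Lemma \ref{3.10}: the kubes $\{K^l_m:\hat K^l_m\subseteq\hat K^k_j\}$ are pairwise disjoint and their union is exactly $\hat K^k_j$, and moreover $|K^l_m|\approx|\hat K^l_m|$. Consequently
\[
\sum_{\hat K^l_m\subseteq\hat K^k_j}|\hat K^l_m|\;\approx\;\sum_{\hat K^l_m\subseteq\hat K^k_j}|K^l_m|\;=\;|\hat K^k_j|,
\]
so $\{|\hat K^k_j|\}$ is a Carleson sequence with absolute constant independent of the choice of ambient tent. For (ii), weak-(1,1) of $M_{\mathcal T}$ is immediate (take the maximal tents on which the average of $|f|$ exceeds $\lambda$, whose volumes sum to at most $\lambda^{-1}\|f\|_{L^1}$), and the $L^\infty$ bound is trivial; Marcinkiewicz interpolation then yields the sharp constant $p^\prime$ exactly as in the Euclidean dyadic case.

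Combining these, we run the classical layer-cake argument. Writing $E_\lambda:=\{M_{\mathcal T}f>\lambda\}$ and choosing, for each $\lambda>0$, the maximal tents $\hat K\in\mathcal T$ with $\langle f\rangle_{\hat K}>\lambda$ (which are pairwise disjoint and whose union lies in $E_\lambda$), the Carleson condition (i) gives
\[
\sum_{\hat K^k_j:\,\langle f\rangle_{\hat K^k_j}>\lambda}|\hat K^k_j|\;\leq\;\sum_{\text{maximal }\hat K}\sum_{\hat K^l_m\subseteq\hat K}|\hat K^l_m|\;\lesssim\;\sum_{\text{maximal }\hat K}|\hat K|\;\leq\;|E_\lambda|.
\]
Therefore
\[
\sum_{\hat K^k_j\in\mathcal T}\langle f\rangle^p_{\hat K^k_j}|\hat K^k_j|\;=\;p\int_0^\infty\lambda^{p-1}\sum_{\hat K^k_j:\,\langle f\rangle_{\hat K^k_j}>\lambda}|\hat K^k_j|\,d\lambda\;\lesssim\;p\int_0^\infty\lambda^{p-1}|E_\lambda|\,d\lambda\;=\;\|M_{\mathcal T}f\|_{L^p}^p\;\leq\;(p^\prime)^p\|f\|_{L^p}^p,
\]
which is the desired inequality.

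I do not expect any step to be a serious obstacle. The only non-routine piece is extracting the sharp constant $p^\prime$ (rather than just $p/(p-1)$ times a universal constant) from Marcinkiewicz; this is standard but requires the weak-(1,1) bound to have constant $1$, which holds for dyadic-type maximal operators since the exceptional set is a disjoint union of maximal stopping tents. The Carleson condition is free from the kube-partition structure of $\mathcal T$, so the bulk of the argument is simply transporting the Euclidean dyadic proof to the tent system.
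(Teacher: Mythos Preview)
Your argument is correct, but the paper's proof is considerably more direct. Instead of invoking the full Carleson embedding principle with the layer-cake decomposition, the paper simply observes that for each $z\in K^k_j$ one has $\langle |f|\rangle_{\hat K^k_j}\leq M_{\mathcal T}f(z)$, and since the kubes $K^k_j$ are pairwise disjoint with $|\hat K^k_j|\approx|K^k_j|$, one immediately gets
\[
\sum_{\hat K^k_j\in\mathcal T}\langle |f|\rangle_{\hat K^k_j}^p|\hat K^k_j|\lesssim\sum_{\hat K^k_j\in\mathcal T}\langle |f|\rangle_{\hat K^k_j}^p|K^k_j|\leq\int_\Omega(M_{\mathcal T}f)^p\,dV\leq(p')^p\|f\|_{L^p}^p.
\]
So both proofs use exactly the same two ingredients---the kube partition from Lemma~\ref{3.10} and the sharp bound $\|M_{\mathcal T}\|_{L^p}\leq p'$---but the paper applies them in a single pointwise estimate rather than packaging them as a Carleson condition and then unwrapping via level sets. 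Your route has the advantage of placing the lemma in the general Carleson embedding framework (useful if one later wants to handle more general Carleson sequences $\{a_{\hat K^k_j}\}$), while the paper's route is shorter for this specific case where $a_{\hat K^k_j}=|\hat K^k_j|$.
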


\begin{proof}
Recall the maximal operator $M_{\mathcal T}$ given by
\[M_{\mathcal T}f(z)=\sup_{\hat K^k_j\in \mathcal T}\langle |f|\rangle_{\hat K^k_j}1_{\hat K^k_j}(z).\]
For $p\in (1,\infty)$, a standard argument yields that $\|M_{\mathcal T}\|_{L^p(\Omega)}\leq p^\prime$.
Recall also that for each tent  $\hat K^k_j$, its induced kube $K^k_j$ satisfying $|\hat K^k_j|\approx |K^k_j|$. Then
\[\sum_{\hat K^k_j\in\mathcal T }\langle |f|\rangle^p_{\hat K^k_j} |\hat K^k_j|\lesssim\sum_{\hat K^k_j\in\mathcal T }\langle |f|\rangle^p_{\hat K^k_j} |K^k_j|\leq \int_{\Omega}(M_{\mathcal T}f)^pdV\leq (p^\prime)^p\|f\|^p_{L^p(\Omega)}.\]
\end{proof}

By Lemma \ref{lem2.13} and Theorem \ref{thm2.14}, we need to estimate the norm of the operator
\[\sum_{\hat K^k_j\in \mathcal T}\int_{\hat K_j} H_{\hat K_j}(z,w)f(w)dV(w)1_{\hat K_j}(z),\]
on the weighted space $L^2(\mathcal W)$. Here $H_{\hat K_j}$ satisfying $|H_{\hat K_j}(z,w)|\leq|\hat K^k_j|^{-1}$. This is equivalent to estimating  the operator 
\[Q_W(f)(z)=\sum_{\hat K^k_j\in \mathcal T}\int_{\hat K_j} H_{\hat K_j}(z,w)\mathcal W^{1/2}(z)\mathcal W^{-1/2}(w)f(w)dV(w)1_{\hat K_j}(z),\]
over the unweighted space  $L^2(\Omega,\mathbb C^d)$. Set \[\mathcal W_{-1}(z)=\sum_{\hat K^k_j\in \mathcal T}\langle W^{-1}\rangle_{ K^k_j}1_{K^k_j}(z).\]
For  vector-valued functions $f,g\in L^2(\Omega, \mathbb C^d)$,
\begin{align}\label{(4.1)}
&|\langle Q_W(f), g\rangle|\nonumber\\\leq &\sum_{\hat K^k_j\in \mathcal T}\left|\int_{\Omega}\left\langle\int_{\hat K^k_j} H_{\hat K_j}(z,w)\mathcal W^{-1/2}(w)f(w)dV(w)1_{\hat K^k_j}(z), \mathcal W^{1/2}(z)g(z)\right\rangle dV(z)\right|\nonumber\\=&\sum_{\hat K^k_j\in \mathcal T}\left|\int_{\Omega}\int_{\hat K^k_j} \left\langle H_{\hat K^k_j}(z,w)\langle \mathcal W^{-1}\rangle^{1/2}_{\hat K^k_j} \langle \mathcal W^{-1}\rangle^{-1/2}_{\hat K^k_j} \mathcal W^{-1/2}(w)f(w), \mathcal W^{1/2}(z)g(z)1_{\hat K^k_j}(z)\right\rangle dV(w)dV(z)\right|\nonumber\\=&\sum_{\hat K^k_j\in \mathcal T}\left|\int_{\hat K^k_j}\int_{\hat K^k_j} \left\langle\langle \mathcal W^{-1}\rangle^{-1/2}_{\hat K^k_j} \mathcal W^{-1/2}(w)f(w), \langle \mathcal W^{-1}\rangle^{1/2}_{\hat K^k_j} \mathcal W^{1/2}(z)g(z) H_{\hat K^k_j}(z,w)\right\rangle dV(w)dV(z)\right|\nonumber\\=&\sum_{\hat K^k_j\in \mathcal T}\left|\int_{\hat K^k_j}\left\langle\langle \mathcal W^{-1}\rangle^{-1/2}_{\hat K^k_j} \mathcal W^{-1/2}(w)f(w), \int_{\hat K^k_j} \langle \mathcal W^{-1}\rangle^{1/2}_{\hat K^k_j}\mathcal W^{1/2}(z)g(z) H_{\hat K^k_j}(z,w)dV(z)\right\rangle dV(w)\right|\nonumber\\\leq&\sum_{\hat K^k_j\in \mathcal T}\int_{\hat K^k_j}\nm\langle \mathcal W^{-1}\rangle^{-1/2}_{\hat K^k_j} \mathcal W^{-1/2}(w)f(w)\nm\bignm \int_{\hat K^k_j} \langle \mathcal W^{-1}\rangle^{1/2}_{\hat K^k_j}\mathcal W^{1/2}(z)g(z) H_{\hat K^k_j}(z,w)dV(z)\bignm dV(w)\nonumber\\\leq&\sum_{\hat K^k_j\in \mathcal T}\int_{\hat K^k_j}\nm\langle \mathcal W^{-1}\rangle^{-1/2}_{\hat K^k_j} \mathcal W^{-1/2}(w)f(w)\nm \int_{\hat K^k_j} \bignm\langle \mathcal W^{-1}\rangle^{1/2}_{\hat K^k_j}\mathcal W^{1/2}(z)g(z) \bignm|\hat K^k_j|^{-1}dV(z) dV(w)\nonumber\\\lesssim&\sum_{\hat K^k_j\in \mathcal T}\left\langle \nm\langle \mathcal W^{-1}\rangle^{-1/2}_{\hat K^k_j}\mathcal W^{-1/2}f\nm\right\rangle_{\hat K^k_j}\left\langle \nm\langle \mathcal W^{-1}\rangle^{1/2}_{\hat K^k_j}\mathcal W^{1/2}g\nm\right\rangle_{\hat K^k_j}|K^k_j|\nonumber\\\leq&\left(\sum_{\hat K^k_j\in \mathcal T}\left\langle \nm\langle \mathcal W^{-1}\rangle^{-1/2}_{\hat K^k_j}\mathcal W^{-1/2}f\nm\right\rangle^2_{\hat K^k_j}|K^k_j|\right)^{\frac{1}{2}}\left(\sum_{\hat K^k_j\in \mathcal T}\left\langle \nm\langle \mathcal W^{-1}\rangle^{1/2}_{\hat K^k_j}\mathcal W^{1/2}g\nm\right\rangle^2_{\hat K^k_j}|K^k_j|\right)^{\frac{1}{2}}.
\end{align}
Set operators $S_{1,W}$ and $S_{2,W}$ to be as follows:
\begin{align}
S_{1,W}(f)(z)&:=\sum_{\hat K^k_j\in \mathcal T}\left\langle \nm\langle \mathcal W^{-1}\rangle^{-1/2}_{\hat K^k_j}\mathcal W^{-1/2}f\nm\right\rangle^2_{\hat K^k_j}1_{K^k_j}(z),\nonumber\\S_{2,W}(g)(z)&:=\sum_{\hat K^k_j\in \mathcal T}\left\langle \nm\langle \mathcal W^{-1}\rangle^{1/2}_{\hat K^k_j}\mathcal W^{1/2}g\nm\right\rangle^2_{\hat K^k_j}1_{K^k_j}(z).\nonumber
\end{align}
For $S_{2,W}(g)$, H\"older's inequality implies
\begin{equation}\label{4.3}\langle \nm\langle \mathcal W^{-{1}/{2}}\rangle^{{1}/{2}}_{\hat K^k_j} \mathcal W^{{1}/{2}}g\nm\rangle_{\hat K_j}\leq \langle \nm\langle \mathcal W^{-{1}/{2}}\rangle^{{1}/{2}}_{\hat K^k_j} \mathcal W^{{1}/{2}}\nm^{2r}\rangle^{{1}/{2r}}_{\hat K_j}\langle\nm g\nm^{(2r)^\prime}\rangle^{{1}/{(2r)^\prime}}_{\hat K_j}.\end{equation}
Choosing $r$ as in Lemma \ref{lem4.5} and applying the lemma,  we have
\begin{align}
\langle\nm\langle \mathcal W^{-1}\rangle^\frac{1}{2}_{\hat K^k_j}\mathcal W^{\frac{1}{2}}\nm^{2r}\rangle_{\hat K^k_j}&=|\hat K^k_j|^{-1}\int_{\hat K^k_j}\nm\langle \mathcal W^{-1}\rangle^\frac{1}{2}_{\hat K^k_j}\mathcal W(w)\langle \mathcal W^{-1}\rangle^\frac{1}{2}_{\hat K^k_j}\nm^{r}dV(w)\nonumber\\&\lesssim|\hat K^k_j|^{-1}\int_{\hat K^k_j}\left(\text{Tr}\left(\langle \mathcal W^{-1}\rangle^\frac{1}{2}_{\hat K^k_j}\mathcal W(w)\langle \mathcal W^{-1}\rangle^\frac{1}{2}_{\hat K^k_j}\right)\right)^{r}dV(w)
\nonumber\\&\lesssim|\hat K^k_j|^{-1}\int_{\hat K^k_j}\max_{1\leq k\leq d}\left\langle\langle \mathcal W^{-1}\rangle^\frac{1}{2}_{\hat K^k_j}\mathcal W(w)\langle \mathcal W^{-1}\rangle^\frac{1}{2}_{\hat K^k_j}e_k,e_k\right\rangle_{\mathbb C^d}^{r}dV(w)
\nonumber\\&\leq\sum_{k=1}^d|\hat K^k_j|^{-1}\int_{\hat K^k_j}\left\langle\langle \mathcal W^{-1}\rangle^\frac{1}{2}_{\hat K^k_j}\mathcal W(w)\langle \mathcal W^{-1}\rangle^\frac{1}{2}_{\hat K^k_j}e_k,e_k\right\rangle_{\mathbb C^d}^{r}dV(w)
\nonumber\\&\lesssim\sum_{k=1}^d\left(|\hat K^k_j|^{-1}\int_{\hat K^k_j}\left\langle\langle \mathcal W^{-1}\rangle^\frac{1}{2}_{\hat K^k_j}\mathcal W(w)\langle \mathcal W^{-1}\rangle^\frac{1}{2}_{\hat K^k_j}e_k,e_k\right\rangle_{\mathbb C^d}dV(w)\right)^{r}\nonumber\\&\lesssim  (\mathcal B_2(W))^{r}.
\end{align}
Applying this inequality to (\ref{4.3}) gives
\[S_{2,W}(g)(z)\lesssim (\mathcal B_2(W))^{\frac{1}{2}} \sum_{\hat K^k_j\in \mathcal T}\langle\nm g\nm^{(2r)^\prime}\rangle_{\hat K^k_j}^{\frac{1}{(2r)^\prime}}1_{ K^k_j}(z),\]
Set $h=\|g\|^{(2r)^\prime}$ and $p=2/(2r)^\prime$.
Then 
\[\|S_{2,W}(g)\|^2_{L^2(\Omega)}\lesssim \mathcal B_2(W) \sum_{\hat K^k_j\in \mathcal T}\langle h\rangle_{\hat K^k_j}^{p} |\hat K^k_j|\lesssim \mathcal B_2(W)(p^\prime)^p\|h\|^p_{L^p(\Omega)}=\mathcal B_2(W)(p^\prime)^p\|g\|^2_{L^2(\Omega)},\]
where the second inequality above follows from Lemma \ref{lem2.16}.
Since $p^\prime=2+\frac{1}{\epsilon}\lesssim \frac{1}{\epsilon}$ and $p\leq 1+\epsilon$, we have $(p^\prime)^p\lesssim 1/\epsilon=(r-1)^{-1}$. From the choice of $r$ in Lemma \ref{lem4.5}, \[(r-1)^{-1}\approx \mathcal B_2(W).\]
Therefore, \[\|S_{2,W}g(z)\|^2_{L^2(\Omega)}\lesssim\mathcal B^2_2(W)\|g\|^2_{L^2(\Omega)}.\]

We turn to estimate the norm of $S_{1,W}f$. Note that
\[\int_{\hat K^k_j}\langle \mathcal W^{-1}\rangle^{-\frac{1}{2}}_{\hat K^k_j}\mathcal W^{-1}(w)\langle \mathcal W^{-1}\rangle^{-\frac{1}{2}}_{\hat K^k_j}dV(w)=\mathbf I_d.\]
Using this fact and going through the same argument as $S_{2,W}g$ yield that
\[\|S_{1,W}f(z)\|^2_{L^2(\Omega)}\lesssim\mathcal B_2(W)\|f\|^2_{L^2(\Omega)}.\]
Combining these inequalities, we obtain
\[\langle Q_wf,g\rangle\lesssim (\mathcal B_2(W))^{\frac{3}{2}}\|f\|_{L^2(\Omega)}\|g\|_{L^2(\Omega)},\]
which shows the desired estimate  $\|P\|_{L^2(\Omega,\mathcal W)}\leq \| Q_W\|_{L^2(\Omega)}\lesssim  (\mathcal B_2(W))^{\frac{3}{2}}$. 
\section{Remarks}
\paragraph{1}  By the proof of Theorem \ref{thm2.14}, it's not hard to see that the positive Bergman operator $P^+$ also belongs to a convex body valued sparse operator. The same argument in Section 5 then yields the following corollary for the weighted norm of  $P^+$ on the  weighted space  $L^2(\tilde{\mathcal W})$:
\begin{co} Let $\Omega$ be a simple domain.
Let $W$ be a matrix $\mathcal B_2$ weight. Let $\tilde{\mathcal W}$ be a weight constructed based on $W$
as in Definition \ref{de4.1}. Then $\|P^+\|_{L^2(\tilde{\mathcal W})}\lesssim \mathcal B_2^{3/2}(W)$.\end{co}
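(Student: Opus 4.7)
The plan is to mimic the proof of Theorem \ref{thm1.2}, the only new ingredient being a convex body domination for $P^+$ analogous to Theorem \ref{thm2.14}. This I would verify first. The proof of Theorem \ref{thm2.14} uses only the pointwise kernel estimate of Theorem \ref{thm4.5}, namely $|K_\Omega(z;\bar w)| \lesssim |B^\#(\pi(z),t)|^{-1}$, together with the annular decomposition around $\pi(z)$. Since the kernel of $P^+$ is $|K_\Omega|$, which enjoys the identical size bound, the same annular decomposition (with test functions $\varphi_j(w) := |\hat K_j|\,|K_\Omega(z;\bar w)|\,1_{U_j}(w)$ satisfying $\|\varphi_j\|_\infty \le C$) yields $P^+(f)(z) \in CLf(z)$ for all $z\in\Omega$.

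Once this domination is in hand, I would apply Lemma \ref{lem2.13} to rewrite the boundedness of $P^+$ on $L^2(\tilde{\mathcal W})$ as the analogous boundedness of the finitely many dyadic operators
\[T_l f(z) := \sum_{\hat K^k_j\in\mathcal T_l}\int_{\hat K^k_j} H_{\hat K^k_j}(z,w)f(w)\,dV(w)\,1_{\hat K^k_j}(z),\qquad \|H_{\hat K^k_j}\|_\infty \le C|\hat K^k_j|^{-1},\quad l=1,\dots,N.\]
Because $\tilde{\mathcal W} = \sum_{l=1}^N \mathcal W_l$ and $N$ is finite, we have $\|f\|^2_{L^2(\tilde{\mathcal W})} = \sum_l \|f\|^2_{L^2(\mathcal W_l)}$ with $\|f\|_{L^2(\mathcal W_l)} \le \|f\|_{L^2(\tilde{\mathcal W})}$, so it suffices to prove $\|T_l\|_{L^2(\mathcal W_l)} \lesssim \mathcal B_2(W)^{3/2}$ for each fixed $l$.

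For the final step I would rerun the computation of Section 5 verbatim with $\mathcal W$ replaced by $\mathcal W_l$: pass to the unweighted norm of the conjugated operator $Q_{\mathcal W_l}$, dominate the bilinear pairing via Cauchy--Schwarz by the product of the square-type operators $S_{1,\mathcal W_l}$ and $S_{2,\mathcal W_l}$, and apply the reverse H\"older inequality (Lemma \ref{lem4.5}), the dyadic Carleson embedding (Lemma \ref{lem2.16}), and the $\mathcal B_2$ control on $\mathcal W_l$ (Lemma \ref{lem4.2}). None of these ingredients uses any feature of the Bergman kernel beyond the uniform size bound encoded in the $H_{\hat K^k_j}$, so the argument carries over and delivers $\mathcal B_2(W)^{3/2}$. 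The only step where one might anticipate real difficulty is the convex body domination for $P^+$, but since Theorem \ref{thm2.14} is a purely size-based argument and $|K_\Omega|$ inherits the same size, this extension is automatic. Note that the extra factor $\mathcal B_2^{1/2}$ coming from Theorem \ref{thm4.3}, which was needed to pass from $L^2(\tilde{\mathcal W})$ back to $L^2(W)$ in the proof of Theorem \ref{thm1.2}, does not appear here because the statement is already phrased on $L^2(\tilde{\mathcal W})$.
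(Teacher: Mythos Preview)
Your proposal is correct and follows exactly the route the paper indicates: the paper simply remarks that the proof of Theorem~\ref{thm2.14} uses only the size bound $|K_\Omega(z;\bar w)|\lesssim |B^\#(\pi(z),t)|^{-1}$, so $P^+$ enjoys the same convex body domination, and then the argument of Section~5 (which yields the $\mathcal B_2(W)^{3/2}$ bound on $L^2(\mathcal W)$, hence on $L^2(\tilde{\mathcal W})$) applies verbatim. Your observation that the extra $\mathcal B_2^{1/2}(W)$ factor from Theorem~\ref{thm4.3} is not incurred here, because the estimate is stated directly on $L^2(\tilde{\mathcal W})$, is exactly the point.
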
 
We are unable to obtain estimates for $\|P^+\|_{L^2(W)}$ since Lemma \ref{lem4.4} is not available for a general vector-valued function $f\in L^2(W)$.
\vskip 5pt
\paragraph{2} As pointed out in \cite{HWW2} for the scalar-valued case, the products of averages of $W$ and $W^{-1}$ over the whole domain and over the small tents will have different impacts on the estimate for the weighted norm of the projection $P$. However, our estimate in Theorem \ref{thm1.2} is less likely to be sharp. Hence we didn't consider this difference in here for the simplicity of the argument.

\bibliographystyle{alpha}
\bibliography{2} 
\end{document}